\newtheorem{theorem}{Theorem}[section]
\newtheorem{lemma}[theorem]{Lemma}
\newtheorem{proposition}[theorem]{Proposition}
\newtheorem{remark}[theorem]{Remark}
\newtheorem{thmx}{Theorem}
\newcommand{\e}{\varepsilon}
\newcommand{\R}{\mathbb{R}}
\newcommand{\weakto}{\rightharpoonup}
\newcommand{\N}{\mathbb{N}}
\def\bbm[#1]{\mbox{\boldmath $#1$}}
\newcommand{\beq }{\begin{equation}}
\newcommand{\eeq }{\end{equation}}
\def\sideremark#1{\ifvmode\leavevmode\fi\vadjust{\vbox to0pt{\vss
 \hbox to 0pt{\hskip\hsize\hskip1em
 \vbox{\hsize3cm\tiny\raggedright\pretolerance10000
  \noindent #1\hfill}\hss}\vbox to8pt{\vfil}\vss}}}%
\begin{document}

\title[Compactly supported steady solutions of the 2D Euler equations]{Symmetry results for compactly supported steady solutions of the 2D Euler equations}

\author{David Ruiz}
  \address{David Ruiz \\
    IMAG, Universidad de Granada\\
    Departamento de An\'alisis Matem\'atico\\
    Campus Fuentenueva\\
    18071 Granada, Spain}
  \email{daruiz@ugr.es}

\thanks{D. R. has been supported by the FEDER-MINECO Grant PGC2018-096422-B-I00 and by J. Andalucia (FQM-116). He also acknowledges financial support from the Spanish Ministry of Science and Innovation (MICINN), through the \emph{IMAG-Maria de Maeztu} Excellence  Grant CEX2020-001105-M/AEI/10.13039/501100011033.}


\keywords{Euler equations; circular flows; semilinear elliptic equations; free boundary problems; method of moving planes.}

\subjclass[2010]{35B06; 35B50; 35B53; 35J61; 76B03.}

\begin{abstract}
In this talk we present some results regarding compactly supported solutions of the 2D steady Euler equations. Assuming that $\Omega = \{x \in \R^2:\ u(x) \neq 0\}$ is an  annular domain, we prove that the streamlines of the flow are circular. We are also able to remove the topological condition on $\Omega$ if we impose regularity and nondegeneracy assumptions on $u$ at $\partial \Omega$. The proof uses that the corresponding stream function solves an elliptic semilinear problem $-\Delta \phi = f(\phi)$ with $\nabla \phi=0$ at the boundary. One of the main difficulties in our study is that $f$ is not Lipschitz continuous near the boundary values. However, $f(\phi)$ vanishes at the boundary values and then we can apply a local symmetry result of F. Brock to conclude. 

In the case $\partial_{\nu} u \neq 0$ at $\partial \Omega$ this argument is not possible. In this case we are able to use the moving plane scheme to show symmetry, despite the possible lack of regularity of $f$. We think that such result is interesting in its own right and will be stated and proved also for higher dimensions. The proof requires the study of maximum principles, Hopf lemma and Serrin corner lemma for elliptic linear operators with singular coefficients.

\end{abstract}

\maketitle

\section{Introduction}
\setcounter{equation}{0}

In this paper we study stationary solutions of the 2D Euler equations:

\begin{equation} \label{1} \left \{ \begin{array}{ll}  u \cdot \nabla u & = - \nabla p, \\ div \, u & =0.\end{array}  \right. \end{equation}

In particular we are interested in nonzero compactly supported solutions of the above problem. In dimension 3, the existence or not of such solutions has been an open problem for many years. In \cite{jiu} it is proved that if $u$ is a compactly supported axisymmetric solution of \eqref{1} with no swirl, then $u=0$. Another rigidity result was given in \cite{nad, chae} for Beltrami fields of finite energy. But a general answer to this question was open until the recent work \cite{gavrilov}, where a nonzero compactly supported solution is built. This example has been further improved in \cite{constantin} where it is also extended to other fluid equations. A different construction of a solution, piecewise smooth but discontinuous, has been recently given in \cite{enciso} 

The existence of compactly supported solutions for \eqref{1} in dimension 2 is a much simpler question. If $\omega$ is the vorticity of the fluid, then:

\begin{equation} \label{0parallel} \nabla \omega \cdot u =0. \end{equation}

Let us denote by $\phi: \R \to \R $ the corresponding stream function (that is, $u= \nabla^{\perp} \phi$ and then $\Delta \phi = \omega$). In this way, \eqref{0parallel} reduces to:
\begin{equation} \label{preparallel} \nabla (\Delta \phi) \cdot \nabla^{\perp} \phi =0. \end{equation}

In words, the gradient of the stream function and the gradient of its laplacian must be parallel. It is clear that \eqref{preparallel} is satisfied for any (say, $C^2$) radially symmetric function $\phi$ with compact support. Of course this is also true if $\phi = \phi_1 + \phi_2$, where $\phi_i$ are radially symmetric functions with respect to points $p_i\in \R^2$ and with disjoint support. 

Observe, however, that for all such examples the streamlines of the flow are circular lines (with possibly different centers). One could wonder whether there exists a compactly supported solution to \eqref{1} in dimension 2 with noncircular streamlines. This question is the main motivation of our work. 

We could not find in the literature any condition on compactly supported solutions of the 2D Euler equations that leads to radial symmetry. There are some related results available, though. In \cite{hamel4} radial symmetry is proved for solutions in bounded domains under constant tangential velocity at the boundary: however this constant is not allowed to be $0$. Another symmetry result is \cite{gs1} for nonnegative and compactly supported vorticity, and here the velocity field need not have compact support (it is an immediate consequence of the divergence theorem that the unique compactly supported velocity field with nonnegative vorticity is $0$). 

On the other hand, there are very recent constructions of compactly supported solutions with noncircular streamlines. In \cite{gs2} nontrivial patch solutions with three layers are built; here $\omega$ has the form $\sum_{i=1}^3 c_i 1_{D_i}$ for some $c_i \in \R$ and some domains $D_i$ which are perturbations of concentric disks. In the forthcoming paper \cite{noi} a different example is given via a nonradial solution of a semilinear problem in a perturbed annulus. In both cases, the solutions are found by using a local bifurcation argument, and in both cases the velocity fields fail to be $C^1$ in $\R^2$.   


Our first result is the following:

\begin{thmx}\label{teo} Let $u: \R^2 \to \R^2$ be a compactly supported $C^1$ solution of \eqref{1}, and define $\Omega = \{x \in \R^2:\ u(x) \neq 0\}$. We assume that:	
	\begin{enumerate}
		
		\item[(A1)] $\Omega$ is a $C^0$ annular domain, i.e., $\Omega= G_0 \setminus \overline{G_1}$, where $G_i$ are $C^0$ simply connected domains and $\overline{G_1} \subset G_0$. 
		\item[(A2)] $u$ is of class $C^2$ in $\Omega$.
		
		\end{enumerate} 
	
\medskip Then $\Omega$ is an annulus and $u(x)$ is a circular vector field. Being more specific, there exist $p \in \R^2$ and $0<R_1<R_2$ with $\Omega= A(p; R_1, R_2)$, and a certain function $V$ such that
	$$ u(x)= V(|x-p|) (x-p)^{\perp}.$$
\end{thmx}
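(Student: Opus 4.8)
The plan is to reduce the vector-field problem to a semilinear elliptic equation for the stream function and then exploit a local symmetry principle. First I would observe that on the annular domain $\Omega$ the stream function $\phi$ can be taken constant on each boundary component (since $u=\nabla^\perp\phi$ is tangent to the level sets and vanishes on $\partial\Omega$, each component of $\partial\Omega$ is a level set), and by subtracting a constant I may assume $\phi=0$ on the outer boundary $\partial G_0$. The parallel condition \eqref{preparallel} says $\nabla(\Delta\phi)\parallel\nabla^\perp\phi$, so $\Delta\phi$ is constant along streamlines of $\phi$; where $\nabla\phi\neq 0$ this forces a functional relation $\Delta\phi=g(\phi)$ locally, which I would glue into a global profile $f$ so that $-\Delta\phi=f(\phi)$ in $\Omega$, together with the overdetermined boundary data $\phi=\text{const}$, $\nabla\phi=0$ on $\partial\Omega$. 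The compact support forces $\phi$ to be constant outside $\Omega$, matching the boundary values.

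Next I would try to organize the topology: since $\Omega$ is a $C^0$ annulus, $\phi$ takes its extreme value at an interior critical level, and the structure of the level sets (nested Jordan curves interpolating between the two boundary constants) is the picture one expects from a radial solution. The goal is to show every level set of $\phi$ is a circle concentric with a fixed point $p$. The natural tool is a \emph{local symmetry} result — the excerpt explicitly cites a theorem of F.\ Brock for semilinear problems $-\Delta\phi=f(\phi)$. I would verify the hypotheses of Brock's result: the key point emphasized in the abstract is that although $f$ need not be Lipschitz near the boundary values of $\phi$, one has $f(\phi_{\partial})=0$ at those boundary values (consistent with $\nabla\phi=0$ and the equation there), and this vanishing is exactly what lets the local symmetry / continuous Steiner symmetrization machinery go through despite the loss of regularity.

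Concretely, I would apply the moving-plane or Steiner-symmetrization comparison in each direction to conclude that $\phi$ is locally symmetric about every hyperplane through an appropriate center, and then use the annular topology (a single interior maximum level, connectedness of superlevel sets) to upgrade local symmetry to full radial symmetry: there exists $p$ so that $\phi(x)=\Phi(|x-p|)$ for a one-variable profile $\Phi$, with $\Phi$ monotone between the two boundary radii. This immediately gives $\Omega=A(p;R_1,R_2)$ and, writing $u=\nabla^\perp\phi$, the formula $u(x)=V(|x-p|)(x-p)^\perp$ with $V(r)=\Phi'(r)/r$.

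The main obstacle I anticipate is precisely the low regularity of $f$ at the boundary values of $\phi$: the profile function $f$ defined through the relation $\Delta\phi=f(\phi)$ is a priori only continuous (and possibly non-Lipschitz) near the extreme values that $\phi$ attains on $\partial\Omega$ and at its interior critical level, because $|\nabla\phi|\to 0$ there. Standard moving-plane symmetry arguments rely on the strong maximum principle and Hopf lemma for $w=\phi-\phi^{\text{reflected}}$, whose linearized equation has a zeroth-order coefficient built from difference quotients of $f$ that can blow up where $f$ fails to be Lipschitz. Handling this is the crux: one must either invoke Brock's local-symmetry theorem, which is designed to tolerate exactly this non-Lipschitz situation by using the vanishing $f(\phi_\partial)=0$ rather than a Lipschitz bound, or develop maximum-principle and Hopf/Serrin-corner estimates for operators with singular coefficients (as the abstract announces for the companion $\partial_\nu u\neq 0$ case). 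I would lean on Brock's result here and devote the technical effort to checking its applicability and to the topological step that converts the resulting local symmetry into genuine radial symmetry on the annulus.
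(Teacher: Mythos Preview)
Your approach is essentially the same as the paper's: derive the semilinear equation $-\Delta\phi=f(\phi)$ with overdetermined boundary data, extend $\phi$ by constants outside $\Omega$ using $f(0)=f(1)=0$, apply Brock's local symmetry theorem on a large ball, and then upgrade to full radial symmetry.

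One correction is needed, however: there is no ``interior critical level'' or ``interior maximum'' here. By the very definition of $\Omega$, $u\neq 0$ in $\Omega$, hence $\nabla\phi\neq 0$ everywhere in $\Omega$; the extremal values $0$ and $1$ are attained only on $\partial G_0$ and $\overline{G_1}$ respectively. So $f$ may fail to be Lipschitz only at the boundary values $0$ and $1$, not at any intermediate value. This observation also streamlines (and corrects) your upgrade from local to global symmetry: in Brock's decomposition $D = \big(\bigcup_k A_k\big) \cup S$, where the $A_k$ are disjoint annuli or balls of radial symmetry and $\nabla\phi=0$ on $S$, the non-vanishing of $\nabla\phi$ on $\Omega=D$ forces $S=\emptyset$, and connectedness of $\Omega$ then leaves a single annulus. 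No further argument about ``interior maximum levels'' or superlevel sets is needed (nor would such an argument be available, since there are no interior critical points). The paper also devotes a separate step to proving that each level set $\{\phi=c\}$ is a single closed curve, using the absence of critical points; this is precisely what makes $f$ globally single-valued and replaces your informal ``gluing''.
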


Let us emphasize that the regularity condition in assumption (A1) prevents the appearance of isolated stagnation points. This is a rather typical assumption in many rigidity results in this fashion, see \cite{hamel1, hamel2, hamel3, hamel4}. Moreover, the set $\Omega$ is assumed to be of annular type. If we impose some regularity and nondegeneracy on $\partial \Omega$, this topological requirement can be dropped:

\begin{thmx} \label{teo2} Let $u: \R^2 \to \R^2$ be a compactly supported solution of \eqref{1}, and define $\Omega = \{x \in \R^2:\ u(x) \neq 0\}$. Assume that for some integer $k \geq 2$,

	\begin{enumerate}
		
		\item[(B1)] $\Omega$ is a $C^{1}$ domain, $u \in C^2(\Omega)$ and $u$ is of class $C^k$ in a neighborhood of $\partial \Omega$ relative to $\overline{\Omega}$.
		
		\item[(B2)] $\frac{\partial^j u}{\partial \nu^j} = 0$  if $ j < k$, and $\frac{\partial^k u}{\partial \nu^k}  \neq 0$ for all $x \in \partial \Omega.$
		
	\end{enumerate} 
Then the assertion of Theorem \ref{teo} holds true.
\end{thmx}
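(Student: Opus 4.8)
The plan is to reduce Theorem \ref{teo2} to a semilinear elliptic equation and then to prove radial symmetry by the method of moving planes, the whole point being to push this classical scheme through for a nonlinearity that is merely H\"older continuous, with derivative blowing up at the boundary values. First I would introduce the stream function $\phi$, normalized so that $u=\nabla^{\perp}\phi$. Since $u\neq 0$ in $\Omega$ we have $\nabla\phi\neq 0$ there, so the level sets of $\phi$ are regular curves (the streamlines) and the transport identity \eqref{0parallel} says that the vorticity $\omega=\Delta\phi$ is constant along them; this lets one write $\omega=g(\phi)$, i.e. $-\Delta\phi=f(\phi)$ in $\Omega$ with $f=-g$, while $\nabla\phi=0$ on $\partial\Omega$ and $\phi$ is constant on each boundary component. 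Extending $\phi$ by those constant values on $\R^2\setminus\Omega$ (legitimate because $u$ is compactly supported) produces a solution on all of $\R^2$, with $f$ vanishing at the plateau values.

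The second step is to read off from the nondegeneracy hypothesis (B2) the precise boundary behaviour of $f$. Writing $d(x)=\dist(x,\partial\Omega)$, the vanishing of $\partial^j u/\partial\nu^j$ for $j<k$ together with the nonvanishing of $\partial^k u/\partial\nu^k$ forces $|\nabla\phi|=|u|\asymp d^{\,k}$, hence $|\phi-c|\asymp d^{\,k+1}$ and $\omega=\Delta\phi\asymp d^{\,k-1}$ near $\partial\Omega$. Eliminating $d$ gives $f(s)\asymp |s-c|^{(k-1)/(k+1)}$ as $s\to c$, so $f$ is H\"older but not Lipschitz; a short computation then shows that the linearized coefficient satisfies
\[ f'(\phi)\sim -\,\frac{k(k-1)}{d^{2}} \qquad\text{as } d\to 0 . \]
Thus the zeroth order coefficient that will appear in the moving plane comparison is singular, blowing up to $-\infty$ like the inverse square of the distance to the free boundary.

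I would then run the moving plane scheme on the extended $\phi$, stating and proving the underlying symmetry result in $\R^N$ since the argument is dimension-free. For a fixed direction one slides the reflecting hyperplane in from infinity and studies $w_\lambda=\phi-\phi^\lambda$, which solves $\Delta w_\lambda+c_\lambda(x)\,w_\lambda=0$ with $c_\lambda=[f(\phi)-f(\phi^\lambda)]/[\phi-\phi^\lambda]$. Because $f'$ blows up only to $-\infty$ (at the plateau values) and is bounded above, $c_\lambda^+$ stays bounded, so the interior maximum principles—including those on domains of small measure needed to start the reflection—apply verbatim; the genuine obstacle is at $\partial\Omega$, where $c_\lambda\to-\infty$ and the classical Hopf and Serrin corner barriers break down (with $c\asymp -k(k-1)d^{-2}$ the term $cv$ overwhelms $\Delta v$ for any barrier vanishing only linearly). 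The heart of the proof is therefore to establish Hopf-type and corner lemmas adapted to this singular potential: the associated radial equation $v''=k(k-1)\,d^{-2}\,v$ has homogeneous solutions $d^{\,k}$ and $d^{\,1-k}$, and the correct boundary lemmas must detect vanishing of order $k$ rather than of first order, with barriers built from these exponents. Granting such lemmas, the reflection can be started and closed in every direction, yielding symmetry of $\phi$ about a hyperplane in each direction and hence radial symmetry about a common centre $p$.

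Finally I would translate this back into the geometric statement: radial symmetry of $\phi$ means $\Omega=A(p;R_1,R_2)$ is an annulus and
\[ u=\nabla^{\perp}\phi = V(|x-p|)\,(x-p)^{\perp}, \qquad V(r)=\phi'(r)/r , \]
which is exactly the conclusion of Theorem \ref{teo}. The main difficulty throughout is the $d^{-2}$ singularity of the linearized coefficient at the free boundary: it is harmless, indeed favourable, for the maximum principle, but it destroys the standard boundary-point lemmas, so the construction of Hopf and corner lemmas for elliptic operators with this singular coefficient—matching the barrier decay and the order of vanishing to the degeneracy index $k$—is where the real work lies.
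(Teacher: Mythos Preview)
There is a gap at the very first step. You pass from ``$\omega$ is constant along each streamline'' to ``$\omega=g(\phi)$'', but the transport identity only makes $\omega$ constant on each \emph{connected component} of a level set $\{\phi=c\}$; to obtain a single-valued $f$ you need those level sets to be connected. This fails for a general multiply-connected $\Omega$ (think of two holes), and nothing in your outline rules that out. The paper handles this first, by a short topological argument: from (B2) it shows that on the curves $\partial\Omega_\e$ the tangential component of $u$ never vanishes, so each boundary component contributes winding number~$1$ to the Poincar\'e index; since $u\neq 0$ in $\Omega$ the Brouwer degree is $0$, forcing exactly one hole (Lemma~\ref{degree} and Proposition~\ref{annular0}). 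Only after $\Omega$ is known to be annular does the level-set argument produce a well-defined $f$, and Theorem~\ref{teo2} reduces to Theorem~\ref{teo}.

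Even granting the semilinear structure, your route diverges from the paper's and is considerably heavier. For Theorems~\ref{teo} and~\ref{teo2} the paper does \emph{not} use moving planes: since $u\in C^1(\R^2)$ one has $f(0)=f(1)=0$, the constant extension of $\phi$ solves $-\Delta\phi=f(\phi)$ on a large ball, and Brock's local-symmetry theorem (continuous Steiner symmetrization, Theorem~\ref{brock}) applies off the shelf; the condition $\nabla\phi\neq 0$ in $\Omega$ then rules out plateaus inside $\Omega$ and forces a single radial annulus. The moving-plane machinery with singular zeroth-order coefficient is developed in the paper only for Theorem~\ref{teo3}, where $f(0),f(1)\neq 0$ so the extension trick is unavailable---and even there the singularity is $c\,d\in L^\infty$, i.e.\ of order $d^{-1}$, not the $d^{-2}$ you need. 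Your barrier analysis $v''=k(k-1)d^{-2}v$ with solutions $d^{k}$, $d^{1-k}$ is on the right track, but running moving planes on the extended $\phi$ means coping with genuine plateau regions in the interior of the domain, which is precisely the obstruction Brock's method was invented to sidestep; you would in effect be re-deriving a special case of his result by a harder road.
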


The first step in the proof of Theorem \ref{teo2} is to show that $\Omega$ is an annular domain. This is based on a topological observation that makes use of the Brouwer degree and the lack of stagnation points of $u$ in $\Omega$. We would like to point out that this observation works also in the framework of \cite[Theorem 1.13]{hamel4}. As a consequence, the result there holds without assuming a priori that $\Omega$ is an annular domain. See Proposition \ref{annular0} and Remark \ref{delicate} for more details.

Then we are led to Theorem \ref{teo}. In its proof we are largely inspired by the works of Hamel and Nadirashvili, \cite{hamel1, hamel2, hamel3, hamel4}. In particular, Theorem \ref{teo} can be seen as a version of \cite[Theorem 1.13]{hamel4} with a degenerate boundary condition. The main idea is to show that the stream function $\phi$ solves a semilinear equation, and then to use symmetry results for this kind of problems to conclude.

Let us explain the argument in more detail. First we will show that the level sets of $\phi$ in $\Omega$ are connected  curves; this is a consequence of the lack of stagnation points in $\Omega$. As a consequence we can show that, up to addition and multiplication by constants, the stream function $\phi$ solves a semilinear elliptic problem:
	\begin{equation} \label{preod} \left \{ \begin{array}{rll}  -\Delta \phi  &= f(\phi), \ \ \ \phi  \in (0,1)  &\mbox{ in } \Omega, \\ \phi  &=0,  \qquad \ \nabla \phi=0, & \mbox{ in } \partial G_0, \\ \phi &= 1, \qquad \nabla \phi=0,  &\mbox{ in }\partial G_1, \end{array} \right. \end{equation}	
for some continuous function $f:[0,1]\to \R$. 

We would like to mention that in \cite[Theorem 1.13]{hamel4} one has nonzero constant Neuman derivative at the boundary. Then, the function $f$ is $C^1$ and one can use the results of \cite{reichel, sirakov} to conclude that $\phi$ is radially symmetric and $\Omega$ is an annulus. These results are based on the well-known technique of moving planes, applied to overdetermined elliptic problems as in \cite{serrin}.

However, in our situation $f$ is $C^1$ in $(0,1)$, but $f$ is not differentiable at $0$, $1$, nor even Lipschitz continuous. This is a consequence of the vanishing of $u$ at $\partial \Omega$. Moreover, we do not have any information on the monotonicity of $f$ near $0$ and $1$.

A related result is given in \cite[Theorem 1.10]{hamel4} for simply connected domains with one stagnation point. There $f$ can fail to be Lipschitz at the maximum value, and the authors are able to apply the moving plane method to obtain symmetry. However, the argument there is linked to the fact that the stagnation point is unique, and cannot be translated to our framework.

We emphasize that this is not only a technical question; it is known that in general the moving plane method fails for non Lipschitz continuous functions, see for instance \cite{gnn, brock}. Still, there are some symmetry results without monotonicity or Lipschitz regularity assumptions, that we briefly review below.

A strategy using Pohozaev identities and isoperimetric inequalities was introduced by Lions in \cite{lions} (see also \cite{kesavan-pacella, serra}) but it works only when $\Omega$ is a ball and $f(\phi) \geq 0$. Observe that in our case,
$$ \int_{\Omega} f(\phi) =0,$$
and hence $f$ changes sign. Another strategies to deduce symmetry for non-Lipschitz nonlinearities are the continuous Steiner symmetrization (developed by F. Brock in \cite{brock0, brock}) and a careful use of the moving plane method under some conditions on $f$ (see \cite{dolbeault}). In short, they imply that if $\Omega$ is a ball, then $\phi$ is radially symmetric in a countable number or balls or annuli, and we can have plateaus outside. 

By the $C^1$ regularity of $u$ we have that $Du=0$ on $\partial \Omega$: from this one obtains that $f(0)=f(1)=0$. Hence the function $\phi$ solves the semilinear equation $-\Delta \phi = f(\phi)$ in the whole plane $\R^2$. This allows us to use the general local symmetry results of \cite{brock0, brock}. Since $\nabla \phi$ does not vanish in $\Omega$, the existence of plateaus inside $\Omega$ is excluded and we finish the proof.

It is worth pointing out that the proof works also if $\Omega$ is a punctured simply connected domain (as in \cite[Theorem 1.10]{hamel4}), with almost no changes . This can be seen as a degenerate case of Theorem \ref{teo}, where $G_1$ is collapsed to a single point. See Theorem \ref{teo4} in Section 6 for details.

\medskip 

We are able to give a symmetry result also in the case  $\partial_{\nu} u \neq 0$ in $\Omega$, which would correspond to the case $k=1$ in Theorem \ref{teo2}. In such case $u$ fails to be $C^1$ in $\R^2$, so the formulation of the result needs to be slightly changed.

\begin{thmx} \label{teo3} Let $\Omega \subset \R^2$ a $C^2$ bounded domain and $u \in C^2(\overline{\Omega}, \R^2)$ a solution of the problem:
\begin{equation} \label{eq1} \left \{ \begin{array}{rll}  u \cdot \nabla u & = - \nabla p & \mbox{ in } \Omega, \\ div \, u & =0  & \mbox{ in } \Omega, \\ u & =0 & \mbox{ in } \partial \Omega. \end{array} \right. \end{equation}
Assume that: 

\begin{enumerate}

\item[(C1)] $u(x) \neq 0$ for all $x \in \Omega$.

\item[(C2)] $\partial_{\nu} u(x) \neq 0$ for all  $x \in \partial \Omega.$
\end{enumerate} 

Then the assertion of Theorem \ref{teo} holds true.
\end{thmx}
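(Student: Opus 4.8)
The plan is to reduce Theorem \ref{teo3} to an overdetermined semilinear problem for the stream function and then to run Serrin's moving plane scheme, the novelty being that the linear operator governing the comparison carries a genuinely singular zeroth order coefficient.

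\emph{Step 1: reduction to a semilinear overdetermined problem.} Since $\mathrm{div}\, u = 0$ and $u=0$ on $\partial\Omega$, the velocity admits a single-valued stream function $\phi$ with $u = \nabla^\perp \phi$; as in the discussion preceding \eqref{preod}, $\phi$ solves $-\Delta \phi = f(\phi)$ in $\Omega$ for a continuous $f$, and the no-slip condition forces $\nabla \phi = 0$ on $\partial\Omega$, so $\phi$ is constant on each boundary component. First I would show, using (C1) and the Brouwer-degree argument of Proposition \ref{annular0}, that $\Omega$ cannot be simply connected (an interior extremum of $\phi$ would produce a stagnation point) and is in fact an annular domain $G_0\setminus\overline{G_1}$; normalizing, $\phi=0$ on $\partial G_0$ and $\phi=1$ on $\partial G_1$. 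Since $\nabla\phi=0$ on $\partial\Omega$, differentiating along the boundary gives $D^2\phi\, e_\tau=0$, hence $D^2\phi=(\partial_{\nu\nu}\phi)\,e_\nu\otimes e_\nu$ and $\Delta\phi=\partial_{\nu\nu}\phi$ on $\partial\Omega$. Thus (C2), i.e. $\partial_\nu u\neq 0$, is equivalent to $\partial_{\nu\nu}\phi\neq 0$, that is to $f(0)\neq 0$ and $f(1)\neq 0$; the overdetermination is that $\partial_{\nu\nu}\phi$ is a nonzero constant on each component of $\partial\Omega$.

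\emph{Step 2: regularity of $f$ and the singular linearization.} Because $\phi$ vanishes to first order at $\partial\Omega$ while $\partial_{\nu\nu}\phi\neq 0$, one has $\phi\sim\tfrac12\partial_{\nu\nu}\phi\, d^2$ with $d=\dist(x,\partial\Omega)$, whereas the vorticity $\Delta\phi=-f(\phi)$ is constant on $\partial\Omega$ and varies at most linearly in $d$. Inverting $d\sim\phi^{1/2}$ yields $f(\phi)-f(0)=O(\phi^{1/2})$, so $f$ is $C^1$ in $(0,1)$ but in general only H\"older-$1/2$ at the endpoints, with $f'(\phi)$ blowing up like $\phi^{-1/2}$. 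Consequently, when comparing $\phi$ with a reflection $\phi_\lambda$, the difference $w=\phi_\lambda-\phi$ satisfies $-\Delta w=c(x)\,w$ with $c(x)=(f(\phi_\lambda)-f(\phi))/(\phi_\lambda-\phi)$ blowing up like $\dist(x,\partial\Omega)^{-1}$, and in particular failing to be integrable near $\partial\Omega$.

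\emph{Step 3: moving planes with a singular operator.} For each direction $e$ I would slide the lines $T_\lambda=\{x\cdot e=\lambda\}$ inward from the exterior of $\Omega$, reflect the caps across $T_\lambda$, and compare $w=\phi_\lambda-\phi$, following the scheme for annular and overdetermined domains of \cite{reichel, sirakov}. Starting the process for caps of small measure and propagating the sign of $w$ needs a maximum principle, while closing it at the critical position needs a Hopf boundary lemma and a Serrin corner lemma, all for the operator $-\Delta-c(x)$ with $c$ singular as above; since $\partial_\nu\phi\equiv 0$, the classical Hopf lemma is replaced by a second-order statement powered by $\partial_{\nu\nu}\phi\neq 0$. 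Reflection symmetry in every direction then forces $\Omega$ to be radially symmetric, hence an annulus, and $\phi$, and therefore $u=\nabla^\perp\phi$, to be circular.

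\emph{Main obstacle.} The hard part will be Step 3: proving maximum principle, Hopf lemma and Serrin corner lemma for $-\Delta-c(x)$ when $c(x)\sim\dist(x,\partial\Omega)^{-1}$ is non-integrable and of indefinite sign. I expect to exploit that $w$ itself vanishes to second order at $\partial\Omega$, so that $c(x)w(x)$ remains bounded, and to build explicit barriers adapted to the $d^{-1}$ singularity, combined with a small-measure mechanism in the spirit of Berestycki and Nirenberg to dispense with a favorable sign of $c$. The degenerate boundary behavior $\partial_\nu\phi=0$ must be absorbed into these lemmas through the nondegeneracy $\partial_{\nu\nu}\phi\neq 0$ furnished by (C2).
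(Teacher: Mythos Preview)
Your proposal follows essentially the same route as the paper: reduce to the overdetermined semilinear problem for $\phi$ (Proposition \ref{prop}), establish the annular topology via the Brouwer-degree argument (Proposition \ref{annular0}), and then run the moving-plane scheme of \cite{reichel, sirakov} after building maximum principles, a Hopf lemma, and a Serrin corner lemma for $-\Delta + c(x)$ with $c(x)d(x)\in L^\infty(\Omega)$; the paper does exactly this, using the Hardy inequality on $H_0^1(\Omega)$ for the small-domain maximum principle and explicit radial barriers for the boundary lemmas.

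One caution on your Main obstacle paragraph: the expectation that $w=\phi_\lambda-\phi$ vanishes to second order on $\partial\Omega$, so that $c(x)w(x)$ stays bounded, is not correct. On $\Gamma_0^+$ one has $\phi=0$ but the reflected value $\phi_\lambda(x)=\phi(\pi_\lambda(x))$ is typically bounded away from zero (the reflected point lies well inside $\Omega$), so $w$ does not vanish there and $c(x)w(x)$ is genuinely unbounded. You therefore cannot bypass the singularity this way and must rely on your other two suggestions---explicit barriers adapted to the $d^{-1}$ singularity and a small-domain mechanism---which are precisely the tools the paper develops in Section~4.
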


%
%
%
%
%
%
%
%
%
The proof of Theorem \ref{teo3} uses the same ideas presented before to conclude that $\Omega$ is an annular domain and  $\phi$ solves \eqref{preod}, but now $\phi$ is defined only in $\overline{\Omega}$. As commented above, $f$ is $C^1$ in $(0,1)$ but we cannot assure that $f$ is Lipschitz nor monotone near $0$ and $1$. Moreover, by assumption (C2), $f(0) \neq 0$, $f(1) \neq 0$. Hence, if we extend $\phi$ as constants outside $\Omega$, we do not have a solution of a semilinear equation anymore, and hence the result of \cite{brock} is not applicable.

In this case  we are able to perform the moving plane technique to conclude symmetry. The main reason is that we have a good knowledge of the behavior of $\phi$ near the boundary precisely by assumption (C2). 

Let us give a short explanation of the idea. Generally speaking, the moving plane method is based on comparing the function $\phi$ with $\psi= \phi \circ \pi$, where $\pi$ is the reflection with respect to an hyperplane intersecting the domain. It turns out that

$$ -\Delta (\psi - \phi)+ c(x) (\psi - \phi)=0,$$
where 
$$ c(x)= \left \{\begin{array}{ll} \displaystyle - \frac{f(\psi(x)) - f(\phi(x))}{\psi(x) - \phi(x)} & \mbox{ if } \phi(x) \neq \psi(x), \\ 0 &  \mbox{ if } \phi(x) = \psi(x). \end{array} \right. $$

\smallskip If $f$ is Lipschitz continuous, then $c(x) \in L^\infty(\Omega)$ and we can apply the well-known properties of the operator $- \Delta + c(x)$. In our case, however, $c(x)$ can be singular at $\partial \Omega$. The key observation here is that thanks to (C2) we can control the singularity of $c(x)$ at $\partial \Omega$; indeed,
$$ c(x) d(x) \in L^{\infty}(\Omega),$$
where $d(x)= dist(x, \partial \Omega)$. In this paper we are able to prove the desired properties for the operator $- \Delta + c(x)$ for such coefficient $c(x)$. Being more specific, we will prove weak and strong maximum principles, the Hopf lemma and the Serrin corner lemma for this kind of operators. With those ingredients in hand, the moving plane strategy can be applied to prove that $\Omega$ is an annulus and $\phi$ is radially symmetric. We think that this result is of independent interest, and it is stated and proved for any dimension.

The rest of the paper is organized as follows. In Section 2 we prove that, under the assumptions of Theorem \ref{teo2} or \ref{teo3}, $\Omega$ is an annular domain. In Section 3 we show that the stream function solves a semilinear elliptic problem under overdetermined boundary conditions. We also complete the proof of Theorems \ref{teo} and \ref{teo2} by making use of \cite{brock0, brock}. The proof of Theorem \ref{teo3} needs a study of elliptic operators with singular coefficients, which is performed in Section 4. In Section 5 we use this study to apply the moving plane procedure and conclude the proof of Theorem \ref{teo3}. Some final comments and remarks are gathered in Section 6.

\medskip

{\bf Notation:} We define $d: \Omega \to \R^+$, 
\begin{equation} \label{d} d(x)= d(x, \partial \Omega) = \min \{ |x-p|, \ p \in \partial \Omega \}. \end{equation}
 
At a regular point of $\partial \Omega$ we denote $\nu$ the exterior normal unit vector and $\tau$ the tangent unit vector (counterclockwise, for instance).
 
Given a vector $x \in \R^N$, we denote by $x_i$ its $i$-th component. If  $x=(x_1, x_2)\in \R^2$ we use the standard notation $x^{\perp} =(-x_2,x_1)$. 

\section{The set $\Omega$ is an annular domain}

In this section we prove that, under the assumptions of Theorem \ref{teo2} or Theorem \ref{teo3},  the domain $\Omega$ is an annular domain. To start with, let us write:
\begin{equation} \label{domains} \Omega= G_0 \setminus \cup_{i=1}^n \overline{G_i},\end{equation}
where $G_i$ are $C^1$ bounded and simply connected domains, $\overline{G_i} \subset G_0$, $\overline{G_i} \cap \overline{G_j}= \emptyset$ if $i\neq j$, $i\geq 1$, $j\geq 1$. If we denote by $\Gamma_i$ the boundaries of $G_i$, we have that

$$ \partial \Omega = \cup_{i=0}^n \Gamma_i.$$

Observe that the conditions $div \, u=0$ and $u=0$ on $\Gamma_i$ (actually $u \cdot \nu =0$ on $\Gamma_i$ would be enough) readily imply the existence of a stream function, $u = \nabla^{\perp} \phi$. We point out that, under the assumptions of Theorem \ref{teo2}, $\phi$ is defined in the whole euclidean plane. Moreover, as commented in the introduction, $\phi$ satisfies:

\begin{equation} \label{parallel} \nabla (\Delta \phi) \parallel \nabla \phi =0 \mbox{ in } \Omega. \end{equation}

The assumption (B2) of Theorem \ref{teo2} implies that:

\begin{equation} \label{key} \left \{ \begin{array}{l} D_j u=0 \mbox{ in } \Omega \mbox{ for } j <k, \\ \displaystyle \frac{\partial^k u}{\partial \tau^i \partial \nu ^{k-i}}=0 \mbox{ if } 1 \leq i \leq k, \quad  \frac{\partial^k u}{\partial \nu ^k}  \neq 0 \ \mbox{ in } \partial \Omega. \end{array} \right. \end{equation}

Here $D_j$ denotes any derivative of order $j$. As a consequence,

\begin{equation} \label{key2} \left \{ \begin{array}{l} D_j \phi=0 \mbox{ in } \Omega \mbox{ for } j <k+1, \\ \displaystyle \frac{\partial^{k+1} \phi}{\partial \tau^i \partial \nu ^{k+1-i}}=0 \mbox{ if } 1 \leq i \leq k+1, \quad  \frac{\partial^{k+1} \phi}{\partial \nu ^{k+1}}  \neq 0 \ \mbox{ in } \partial \Omega. \end{array} \right. \end{equation}

Instead, under assumption (C2) of Theorem \ref{teo3} we have that:

\begin{equation} \label{key3} \left \{ \begin{array}{l} \nabla \phi=0 \mbox{ in } \Omega, \\ \displaystyle \frac{\partial^{2} \phi}{\partial \tau^i \partial \nu ^{2-i}}=0 \mbox{ if } i=1, \ 2, \quad  \frac{\partial^{2} \phi}{\partial \nu ^{2}}  \neq 0 \ \mbox{ in } \partial \Omega. \end{array} \right. \end{equation}

Let us define:
$$ \Omega_{\e}= \{ x \in \Omega: \ d(x) > \e \}.$$

Then there exists $\e_0>0$ such that, for $\e \in (0, \e_0)$ we have that $\partial \Omega_\e$ is $C^2$ and has $n+1$ connected components. Moreover, \eqref{key2} or \eqref{key3} implies that the normal derivative of $\phi$ does not vanish on $\partial \Omega_\e$. This, in turn, implies that the tangential component of $u$ does not vanish in $\partial \Omega_\e$:

\begin{equation} \label{key4} \partial \Omega_\e= \cup_{i=0}^n \Gamma_i(\e) \mbox{ and } u \cdot \tau \neq 0 \mbox{ in } \partial \Omega_\e. \end{equation}

We start with the following general result, that must be well known.

\begin{lemma} \label{degree} Let $\Gamma \subset \R^2$ a $C^1$ closed curve and $F: \Gamma \to \R^2$ a continuous vector field. Assume that $ F \cdot \tau  \neq 0 \ \mbox{ in } \Gamma.$ Then,
$$ i_K(F, \Gamma) = 1,$$
where $i_K$ stands for winding number or Poincar\'{e} index.

\end{lemma}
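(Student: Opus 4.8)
The plan is to read $i_K(F,\Gamma)$ as the topological degree (winding number) of the normalized map $\widehat{F}:=F/|F|:\Gamma\to\mathbb S^1$, and to compare it with the winding number of the unit tangent field $\tau$, which equals $+1$ by the classical theorem of turning tangents (Hopf's Umlaufsatz). First I would observe that, since $\Gamma$ is connected and $F\cdot\tau$ is continuous and nowhere zero, $F\cdot\tau$ has constant sign on $\Gamma$; in particular $F$ never vanishes, so $\widehat{F}$ is a well-defined continuous map into $\mathbb S^1$ and $i_K(F,\Gamma)$ is precisely its degree. Assume first that $F\cdot\tau>0$ everywhere.

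The key step is the straight-line homotopy $H(x,t)=(1-t)\widehat{F}(x)+t\,\tau(x)$, $t\in[0,1]$, which I claim never vanishes. Indeed, pairing the equation $H(x,t)=0$ with $\tau(x)$ gives $(1-t)\,\widehat{F}(x)\cdot\tau(x)+t=0$; for $t\in[0,1]$ both summands are nonnegative (using $\widehat{F}\cdot\tau>0$), so their sum can vanish only if $t=0$ and $\widehat{F}(x)\cdot\tau(x)=0$, which is impossible. Hence $H/|H|$ is a homotopy in $\mathbb S^1$ from $\widehat{F}$ to $\tau$, and therefore $i_K(F,\Gamma)=\deg(\tau)$.

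Finally I would invoke the theorem of turning tangents: for a simple closed $C^1$ curve traversed counterclockwise the unit tangent $\tau$ has winding number $+1$, whence $i_K(F,\Gamma)=1$. In the remaining case $F\cdot\tau<0$ everywhere, the same homotopy with $\tau$ replaced by $-\tau$ is nonvanishing, so $\widehat{F}$ is homotopic to $-\tau$; since $-\tau$ differs from $\tau$ by the constant angular shift $\pi$, it has the same winding number $+1$, and again $i_K(F,\Gamma)=1$.

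The only nonelementary input is the Umlaufsatz, which I would quote as classical; everything else is the sign observation and the nonvanishing check for the linear homotopy, both routine. The one point to record carefully is that $\Gamma$ must be understood as a simple (Jordan) curve — as holds for each component $\Gamma_i(\e)$ of $\partial\Omega_\e$ in the application — since simplicity is exactly what forces the tangent's winding number to be $1$ rather than some other integer; the conclusion $i_K(F,\Gamma)=1$ (as opposed to merely $\pm1$) also fixes the counterclockwise orientation convention for $\tau$.
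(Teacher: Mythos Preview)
Your proof is correct and follows essentially the same route as the paper: both use a straight-line homotopy from $F$ (or $\widehat{F}$) to $\tau$, check nonvanishing by pairing with $\tau$, and then identify $i_K(\tau,\Gamma)=1$. The only cosmetic difference is that the paper computes this last winding number via the total curvature integral $\frac{1}{2\pi}\int_\Gamma K\,ds=1$ (after a density reduction to $C^2$), whereas you invoke the Umlaufsatz by name; your explicit treatment of the case $F\cdot\tau<0$ and your remark that simplicity of $\Gamma$ is what fixes the value $+1$ are welcome additions.
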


\begin{figure}[h]
	\centering 
	\begin{minipage}[c]{100mm}
		\centering
		\resizebox{100mm}{100mm}{\includegraphics{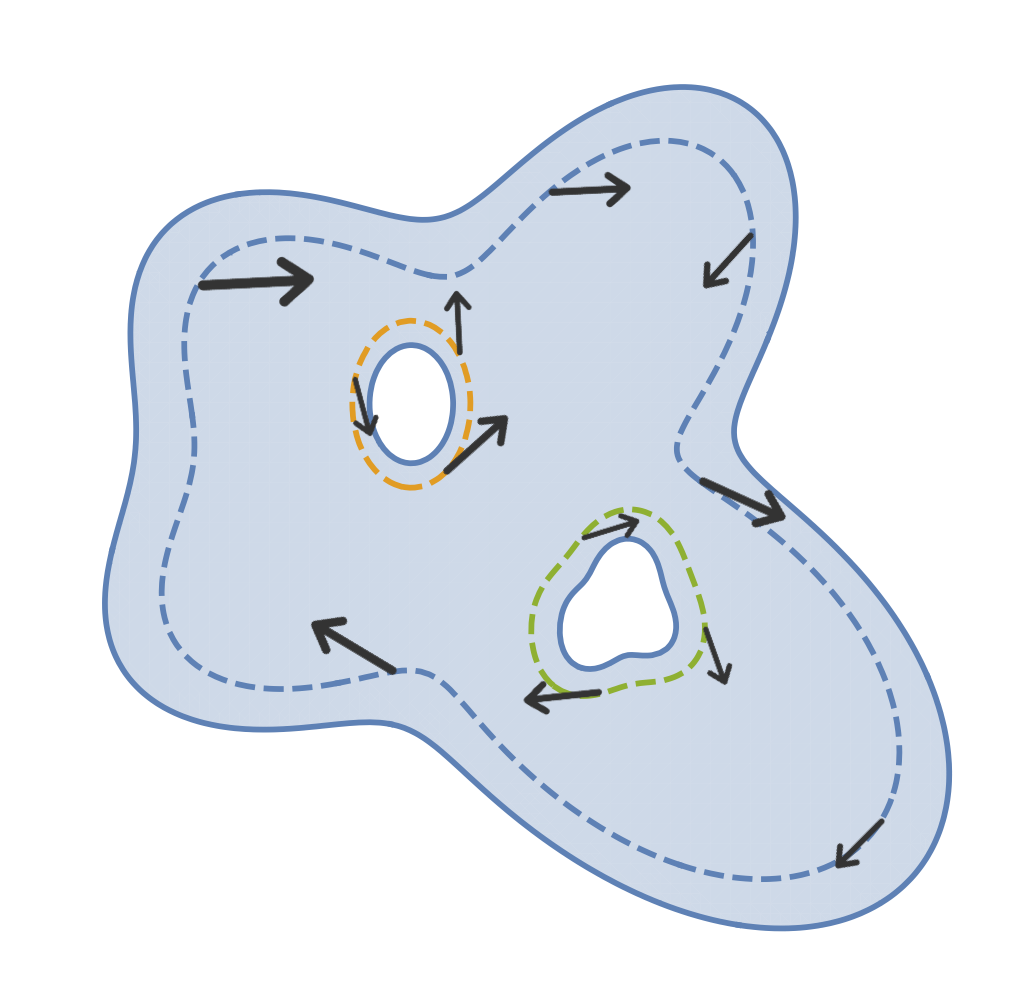}}
	\end{minipage}
	\caption{The domain $\Omega$ with, eventually, two holes, and $\Omega_\e$ (with dashed boundary). On $\partial \Omega_\e$ the tangential component of $u$ is always different from 0, so the winding number on each connected component is 1.}
\end{figure}

\begin{proof} 
	
By density we can assume that $\Gamma$ is $C^2$. Observe that $F(x) \cdot \tau(x)$ preserves its sign along $\Gamma$; let us assume that $F(x) \cdot \tau(x)>0$. We define the homotopy: $H:[0,1] \times \Gamma \to \R^2$, $H(\lambda, x)= \lambda F(x) + (1-\lambda)\tau(x)$. Clearly,

$$ H(\lambda, x) \cdot \tau(x)=  \lambda F(x) \cdot  \tau(x) + (1-\lambda)>0.$$
Then, by the homotopy invariance of $i_K$, it suffices to compute $ i_K(\tau, \Gamma)$.

Assume for simplicity that the length of $\Gamma$ is $ 2 \pi$, and define $\gamma:[0, 2 \pi] \to \Gamma$, $\gamma(0)= \gamma(2\pi)$ a parametrization with respect to the arc length, $\gamma'(s)= \tau(\gamma(s))$. Then we can compute the winding number:

$$ i_K(\tau, \Gamma) = \frac{1}{2\pi} \int_0^{2\pi} \tau(\gamma(s))^\perp \cdot \Big ( \frac{d}{ds} \tau(\gamma(s)) \Big ) \, ds = \frac{1}{2\pi} \int_0^{2\pi} \gamma'(s)^\perp \cdot \gamma''(s) \, ds $$$$= \frac{1}{2\pi} \int_{\Gamma} K(x) \, ds_x=1,$$

where $K(x)$ is the curvature of $\Gamma$ at the point $x$.

\end{proof}

With this lemma in hand we can determine the topology of $\Omega$ as follows.
	
\begin{proposition} \label{annular0} Under the assumptions of Theorem \ref{teo2} or Theorem \ref{teo3}, $\Omega$ is an annular domain, that is, \eqref{domains} holds with $n=1$.
\end{proposition}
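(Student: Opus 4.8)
The plan is to combine the local degree computation of Lemma \ref{degree} with a global argument-principle (Poincar\'e--Hopf) argument on the smooth approximating domain $\Omega_\e$, exploiting that $u$ has no zeros inside $\Omega$.

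First I fix $\e \in (0,\e_0)$ so that, as recorded before the statement, $\Omega_\e$ is a $C^2$ bounded domain whose boundary $\partial \Omega_\e = \cup_{i=0}^n \Gamma_i(\e)$ has exactly $n+1$ connected components, and so that $u\cdot\tau \neq 0$ on $\partial\Omega_\e$ by \eqref{key4}. Since $\overline{\Omega_\e} \subset \Omega$, the field $u$ is continuous and nowhere vanishing on $\overline{\Omega_\e}$. Applying Lemma \ref{degree} to the restriction of $u$ to each boundary component, the condition $u\cdot\tau\neq 0$ on $\Gamma_i(\e)$ gives
$$ i_K(u, \Gamma_i(\e)) = 1 \qquad \text{for every } i=0,1,\dots,n, $$
where each simple closed curve carries its counterclockwise orientation. (Note that the lemma delivers the value $1$ irrespective of whether $u\cdot\tau$ is positive or negative on a given component, since negating a planar vector field leaves its winding number unchanged; thus the possibly varying sign of $u\cdot\tau$ is harmless.)

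Next I invoke the argument principle. Because $u\neq 0$ on all of $\overline{\Omega_\e}$, the angle form $d\theta = d(\arg u)$ is a well-defined closed $1$-form on $\overline{\Omega_\e}$. Integrating over $\partial\Omega_\e$ with its induced (Stokes) orientation and using Stokes' theorem together with $d(d\theta)=0$ yields $\oint_{\partial\Omega_\e} d\theta = \int_{\Omega_\e} d(d\theta) = 0$. The induced orientation runs counterclockwise along the outer boundary $\Gamma_0(\e)$ and clockwise along each inner boundary $\Gamma_i(\e)$, $i\geq 1$, so in terms of winding numbers this reads
$$ i_K(u,\Gamma_0(\e)) - \sum_{i=1}^n i_K(u,\Gamma_i(\e)) = 0. $$
Substituting the value $1$ for each term from the previous step gives $1 - n = 0$, i.e. $n=1$, which is exactly the assertion that $\Omega$ is an annular domain.

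The step requiring the most care is the orientation bookkeeping in the last identity: one must verify that the inner boundary components enter $\oint_{\partial\Omega_\e}d\theta$ with the sign opposite to the outer one, since it is precisely this that produces the cancellation $1-n$. Everything else is a clean application of the already-established Lemma \ref{degree} and of the vanishing of the integral of a closed form over the boundary of the region on which it is defined.
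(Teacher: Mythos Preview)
Your proof is correct and follows essentially the same strategy as the paper: compute the winding number of $u$ along each $\Gamma_i(\e)$ via Lemma~\ref{degree}, and combine these using the fact that $u$ has no zeros in $\Omega_\e$. The only difference is terminological: the paper phrases the global step as the vanishing of the Brouwer degree $d(u,\Omega_\e,0)=0$ together with its decomposition into boundary winding numbers, whereas you obtain the same identity by integrating the closed angle form $d(\arg u)$ and applying Stokes' theorem. These are equivalent formulations of the same argument-principle computation, and your orientation bookkeeping is handled correctly.
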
	

\begin{proof} Take $\e \in (0, \e_0)$ such that \eqref{key4} is satisfied. Recall that $u$ does not vanish in $\Omega$, and hence $$ d(u, \Omega_\e,0)=0.$$
Here $ d(u, \Omega_\e,0) $ stands for the Brouwer degree of $u$ in $\Omega_\e$.
Observe now that, by \eqref{key4} and Lemma \ref{degree},

$$ d( u, \Omega_\e,0) = i_K(u, \Gamma_0(\e)) - \sum_{i=1}^n i_K(u, \Gamma_i(\e)) = 1 - n.$$

Hence $n=1$ and the proof is complete.

\end{proof}

\begin{remark} \label{delicate} The same ideas apply directly if $u \cdot \tau \neq 0$ on $\partial \Omega$, as in \cite[Theorem 1.13]{hamel4}. In this case one does not need to make use of the perturbed domain $\Omega_\e$, and the computation of the degree of $u$ on $\Omega$, together with the lack of stagnation points and Lemma \ref{degree},  implies that $n=1$.
\end{remark}

\section{Proof of Theorems \ref{teo} and \ref{teo2}.}

In view of Proposition \ref{annular0}, Theorem \ref{teo2} reduces to Theorem \ref{teo}. In this section we prove that the stream function solves a semilinear elliptic PDE under overdetermined boundary conditions. This is also true in the setting of Theorem \ref{teo3}. Then, we use the results of \cite{brock0, brock} to complete the proof of Theorem \ref{teo}.

Observe that the stream function $\phi$ must be constant on each connected component $\Gamma_0$, $\Gamma_1$. By adding constants or multiplying $\phi$ by a constant number, we can assume that $\phi=0$ on $\Gamma_0$ and $\phi =1$ on $\Gamma_1$.

\begin{proposition} \label{prop} Under the assumptions of Theorem \ref{teo}  or \ref{teo3}, there exists $f \in C^1(0,1) \cap C[0,1]$ such that the function $\phi$ satisfies:
	
	\begin{equation} \label{od-bis} \left \{ \begin{array}{rll}  -\Delta \phi  &= f(\phi), \ \ \ \phi  \in (0,1)  &\mbox{ in } \Omega, \\ \phi  &=0,  \qquad \ \nabla \phi=0, & \mbox{ in } \Gamma_0, \\ \phi &= 1, \qquad \nabla \phi=0,  &\mbox{ in }\Gamma_1. \end{array} \right. \end{equation}	
Moreover:
\begin{enumerate}
	\item Under the assumptions of Theorem \ref{teo}, $\phi$ is defined in $\R^2$ and $\phi=1$ in $\overline{G_1}$, $\phi=0$ in $\R^2 \setminus G_0$. Moreover, $f(0)=f(1)=0$.
	\item Under the assumptions of Theorem \ref{teo3}, $f(0)<0<f(1)$.
\end{enumerate} 

\end{proposition}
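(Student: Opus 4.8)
The plan is to exploit the parallel condition \eqref{parallel}, which asserts precisely that $\nabla(\Delta\phi)\cdot\nabla^\perp\phi=0$ in $\Omega$; since $\nabla^\perp\phi$ is tangent to the level sets of $\phi$, this means $\Delta\phi$ is constant along each connected component of a level set. So the whole argument hinges on proving that the level sets of $\phi$ in $\Omega$ are connected. First I would record that $0<\phi<1$ throughout $\Omega$. Indeed $\phi$ is continuous on $\overline\Omega$ with $\phi\equiv0$ on $\Gamma_0$ and $\phi\equiv1$ on $\Gamma_1$, and by \eqref{key2}/\eqref{key3} it has no critical point in $\Omega$ (as $\nabla\phi=-u^\perp\neq0$ there). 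Hence $\phi$ attains neither an interior maximum nor an interior minimum, so its range in $\Omega$ lies strictly between $0$ and $1$; in particular, for $c\in(0,1)$ the level set $\{\phi=c\}$ is a compact subset of the open set $\Omega$.

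For connectedness, since $\nabla\phi\neq0$ the map $\phi\colon\Omega\to(0,1)$ is a submersion, and by the previous paragraph it is proper (the preimage of $[a,b]\subset(0,1)$ is closed, bounded, and contained in $\Omega$, hence compact). By Ehresmann's fibration theorem $\phi$ is a locally trivial fibration over the interval $(0,1)$, so all its fibers are diffeomorphic; being compact $1$-manifolds they are finite disjoint unions of Jordan curves. As $\Omega$ is a doubly connected planar domain, homeomorphic to $S^1\times(0,1)$, the fiber must be a single circle. Thus every level set $\{\phi=c\}$, $c\in(0,1)$, is a single Jordan curve, coinciding with one periodic orbit of the nonvanishing field $\nabla^\perp\phi$; along this connected curve $\Delta\phi$ is constant by \eqref{parallel}, so there is a well-defined $f\colon(0,1)\to\R$ with $-\Delta\phi=f(\phi)$ in $\Omega$. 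To see $f\in C^1(0,1)$ I would use $\phi$ itself as a local coordinate near any interior point (possible since $\nabla\phi\neq0$): because $u\in C^2(\Omega)$ gives $\Delta\phi\in C^1(\Omega)$, the quantity $-\Delta\phi$ written as a function of $\phi$ is $C^1$, and the local descriptions agree since $\Delta\phi$ depends only on $\phi$.

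It remains to extend $f$ continuously to $\{0,1\}$ and identify the boundary values, and here the two settings differ. Under Theorem~\ref{teo} the field $u$ is $C^1$ on all of $\R^2$, so $\phi\in C^2(\R^2)$ and $\Delta\phi$ is continuous on $\R^2$; extending $\phi$ by the constants $1$ on $G_1$ and $0$ on $\R^2\setminus G_0$ (legitimate since $u$ vanishes and these sets are connected) gives $\Delta\phi\equiv0$ off $\Omega$, hence $\Delta\phi\equiv0$ on $\Gamma_0\cup\Gamma_1$ by continuity. Since points of $\{\phi=c\}$ approach $\Gamma_0$ as $c\to0^+$ (a limit point would otherwise be an interior or $\Gamma_1$ point with $\phi=0$, impossible), uniform continuity of $\Delta\phi$ forces $f(c)\to0$, and likewise $f(c)\to0$ as $c\to1^-$; thus $f(0)=f(1)=0$, proving (1). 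Under Theorem~\ref{teo3}, $u\in C^2(\overline\Omega)$ gives $\Delta\phi\in C(\overline\Omega)$, and I would show the level curves converge to $\Gamma_0$ (resp.\ $\Gamma_1$) in Hausdorff distance as $c\to0^+$ (resp.\ $c\to1^-$): one inclusion is as above, while the reverse uses that near a boundary point the $C^2$ domain meets small balls in a connected set on which $\phi$ takes all small positive values. Hausdorff convergence plus continuity of $\Delta\phi$ then shows $\Delta\phi$ is constant on $\Gamma_0$ and $f$ extends with $f(0)=-\Delta\phi|_{\Gamma_0}$. Finally, in boundary normal coordinates, using that $\nabla\phi=0$ on $\partial\Omega$ (so $\partial_\nu\phi=0$, and the tangential second derivative vanishes since $\phi$ is constant along $\partial\Omega$) one gets $\Delta\phi=\partial^2\phi/\partial\nu^2$ on $\partial\Omega$; the nondegeneracy \eqref{key3} with a second-order Taylor expansion (recall $\phi>0$ just inside $\Gamma_0$ and $\phi<1$ just inside $\Gamma_1$) yields $\partial^2\phi/\partial\nu^2>0$ on $\Gamma_0$ and $<0$ on $\Gamma_1$, whence $f(0)<0<f(1)$, proving (2).

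The main obstacle I anticipate is the interaction between connectedness of the level sets and the boundary behaviour of $f$. The fibration argument lives in the interior and is robust, but the continuity of $f$ at the endpoints is delicate and regularity-dependent: in the $C^0$ setting of Theorem~\ref{teo} it is rescued by the global identity $\Delta\phi\equiv0$ outside $\Omega$ (so no Hausdorff-convergence of curves is needed), whereas in the $C^2$ setting of Theorem~\ref{teo3} it rests both on the Hausdorff convergence of the level curves to $\partial\Omega$ and on extracting the correct sign of the second normal derivative from \eqref{key3}.
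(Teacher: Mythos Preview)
Your argument is correct and reaches the same conclusion as the paper, but the route differs in two places worth noting.

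For the connectedness of the level sets $\{\phi=c\}$, $c\in(0,1)$, the paper gives an elementary planar argument: each component of $\{\phi=c\}$ is a Jordan curve by the implicit function theorem; if such a curve did not enclose $\overline{G_1}$, or if two components were nested, one would obtain an interior extremum of $\phi$, contradicting $\nabla\phi\neq0$. Your use of Ehresmann's fibration theorem (a proper $C^2$ submersion $\phi\colon\Omega\to(0,1)$ over a contractible base, hence $\Omega\cong F\times(0,1)$ with $F$ a connected compact $1$--manifold) is equally valid and arguably cleaner, at the cost of invoking a heavier tool; the paper's argument stays entirely within elementary plane topology and does not need any smooth structure on $\Omega$ beyond what $\phi$ provides.

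For the continuity of $f$ at the endpoints in the setting of Theorem~\ref{teo3}, the paper asserts that $\Delta\phi$ is constant on $\Gamma_0$ and $\Gamma_1$ by extending the parallel condition \eqref{parallel} to $\overline\Omega$; your approach via Hausdorff convergence of the level curves to $\Gamma_i$ is in fact the more robust justification, since on $\partial\Omega$ one has $\nabla\phi=0$ and the relation $\nabla(\Delta\phi)\parallel\nabla\phi$ becomes vacuous there. Your limit argument (points on $\{\phi=c\}$ with the same $\phi$--value share the same $\Delta\phi$, and they accumulate on every boundary point as $c\to0^+$) is exactly what is needed to conclude that $\Delta\phi$ is constant on each $\Gamma_i$ and that $f$ extends continuously. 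The identification of the sign of $f(0)$ and $f(1)$ via $\Delta\phi=\partial^2\phi/\partial\nu^2$ on $\partial\Omega$ and the Taylor expansion is the same as the paper's Step~4. One small remark: the absence of interior critical points follows directly from $u\neq0$ in $\Omega$ (i.e., from the definition of $\Omega$), not from \eqref{key2} or \eqref{key3}, which concern boundary behavior.
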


\begin{proof}
The proof will be given in several steps.

\medskip

{\bf Step 1.} For any $c \in [0,1]$ the level sets $\phi_c= \{ x \in \overline{\Omega}: \ \phi(x) =c \}$ are continuous curves, which are $C^2$ if $c \in (0,1)$.

\medskip

Since $\phi$ has no critical points in $\Omega$ it is clear that $\phi_0 = \Gamma_0$, $\phi_1= \Gamma_1$. Given $c \in (0,1)$, the set $\phi_c$ is $C^2$ from the Implicit Function Theorem. As a consequence,
$$ \phi_c = \cup_{i=1}^m \gamma_i,$$
where $\gamma_i$ are closed $C^2$ curves in $\Omega$. 

We first show that each curve $\gamma_i$ contains $\overline{G_0}$ in its interior, denoted as $I(\gamma_i)$. Otherwise, the level set $\phi_c$ contains the boundary of $I(\gamma_i)$, and then $\phi$ would present a local maximum or minimum in $I(\gamma_i)$. Recall that this is not possible since $\nabla \phi \neq 0$ in $\Omega$.

We now prove that $m=1$, that is, $\phi_c$ is connected. Otherwise assume that $\gamma_1$ and $\gamma_2$ are closed curves in $\phi_c$. Without loss of generality, we can assume that $\gamma_2 \subset I(\gamma_1)$. Then, again, $\phi$ would have a local maximum or minimum in $I(\gamma_1) \setminus \overline{I(\gamma_2)}$, which is a contradiction. This concludes the proof of Step 1.

\medskip {\bf Step 2. } There exists a continuous function $f:[0,1]\to \R$ such that $-\Delta \phi = f(\phi)$. 

\medskip 

For any $c \in [0,1]$, take $x \in \phi_c$ and define $f(c)= -\Delta \phi(x)$. First, we briefly show that $f$ is well defined, that is, it does not depend on the choice of $x \in \phi_c$. 

In the setting of Theorem \ref{teo}, due to the lack of regularity of $\partial \Omega$, we need to treat separately the cases $c=0$ and $c=1$. Observe that for any $x \in \partial \Omega$, $\Delta \phi(x)=0$, so $f(0)=f(1)=0$ is well defined.

For $c \in (0,1)$  \eqref{parallel} implies that $\nabla \Delta \phi$ is parallel to $\nabla \phi$. Since $\phi_c$ is connected and $C^2$, we conclude that $\Delta \phi$ is constant on $\phi_c$. Observe that this holds also for $c=0$ and $c=1$ under the setting of Theorem \ref{teo3} since, by continuity, \eqref{parallel} is satisfied in $\overline{\Omega}$.

By construction, we have that $-\Delta \phi = f(\phi)$ in $\Omega$. Let us now show that $f$ is continuous. Let $c_n \in [0, 1]$, $c_n \to c$. Let $x_n \in \phi_{c_n}$; up to a subsequence we can assume that $x_n \to x \in \overline{\Omega}$. By continuity of $\phi$, we have that $\phi(x)=c$. Moreover, by the continuity of $\Delta \phi$, we have that:

$$ f(c_n) = \Delta \phi(x_n) \to \Delta \phi(x)= f(c).$$

\medskip {\bf Step 3. } The function $f$ is $C^1$ in $(0,1)$.

\medskip

This is contained in \cite{hamel1, hamel2, hamel3, hamel4}; let us sketch a proof here for the sake of completeness.  Let $c \in (0, 1)$ and $x \in \phi_c \subset \Omega$. For some $\delta >0$, define $ \sigma: (-\delta, \delta) \to \Omega$, 

$$ \left \{ \begin{array}{ll}  \sigma'(t)= \nabla \phi (\sigma(t)), & t \in (-\delta, \delta), \\ \sigma(0)=x. &  \end{array}  \right.$$

Since $\nabla \phi(x) \neq 0$, we have that $g'(0)>0$, where $g= \phi \circ \sigma$. By taking a smaller $\delta>0$ if necessary and a suitable $\e_i >0$, we have that $g: (-\delta, \delta) \to (c-\e_1, c + \e_2 )$ is a diffeomorphism. Now we obtain that
$$f|_{(c-\e_1, c+\e_2)}= - \Delta \phi \circ \sigma \circ g^{-1}.$$

Since $\phi$ is $C^3$ in $\Omega$, then $f|_{(c-\e_1, c+\e_2)}$ is $C^1$ function.

\medskip {\bf Step 4. } Conclusion.
 
\medskip

Under the assumptions of Theorem \ref{teo} , we already saw in Step 2 that $f(0)=f(1)=0$. Moreover, since $u=0$ on $\R^2 \setminus \Omega$, we have that $\phi=1$ in $\overline{G_1}$ and in $\phi=0$ in $\R^2 \setminus G_0$.

In the setting of Theorem \ref{teo3}, \eqref{key3} holds. As a consequence, for any $x \in \Gamma_0$,

$$f(0)=- \Delta \phi(x)= -\frac{\partial^2 \phi}{\partial \nu^2}(x) \neq 0.$$

Since $\phi >0 $ in $\Omega$ we conclude that $f(0) <0$. Analogously, we can show that $f(1)>0$.

\end{proof}

\subsection{Proof of Theorem \ref{teo}}
 Let $B(0,R)$ be an euclidean ball containing $\overline{\Omega}$ and let us consider the function $\phi$ defined in $B(0,R)$. Let us recall that $\phi=0$ in $B(0,R) \setminus \{ G_0\}$ and $\phi=1$ in $G_1$.  Obviously $\phi$ is a nonnegative $C^2$ solution of the problem:
\begin{equation} \label{BR} \left \{ \begin{array}{ll} - \Delta {\phi} = f({\phi}), & \mbox{ in } B(0,R), \\ {\phi}=0 & \mbox{ in } \partial B(0,R). \end{array} \right.\end{equation}
	
Moreover, $f$ is a continuous function and $\nabla \phi(x) \neq 0$ if $x \in \Omega$. We now apply the local symmetry results of F. Brock (\cite{brock}) to $\tilde{\phi}$. For convenience of the reader, we state here the result that we need from \cite{brock}, in a version which is suited for our purposes.

\begin{theorem}[\cite{brock}] \label{brock}Let ${\phi}$ be a nonnegative weak solution of the problem \eqref{BR}, where $f$ is a continuous function. Assume also that the set $D= \{x \in B(0,R): \ 0< {\phi}(x) < \sup {\phi} \}$ is an open set and that ${\phi}$ is $C^1$ in $D$. Then ${\phi}$ is locally symmetric in any direction, namely:	
$$ D = A \cup S, \ \  A= \bigcup_{k \in K} A_k,$$
where:

\begin{enumerate} \item $K$ is a countable set, 
	\item $A_k$ are disjoint open annuli or balls and ${\phi}$ is radially symmetric and decreasing in each domain $A_k$, 
	\item $\nabla {\phi} =0$ on the set $S$.
	\end{enumerate}
\end{theorem}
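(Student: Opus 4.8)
The plan is to prove the statement by the method of \emph{continuous Steiner symmetrization} (CSS), in the spirit of \cite{brock0, brock}. Fix a unit direction $e$, and for $t \in [0,\infty]$ consider the one-parameter family $\phi^t$ obtained by continuously Steiner-symmetrizing $\phi$ in the direction $e$, so that $\phi^0 = \phi$ and $\phi^\infty$ is the usual Steiner symmetrization of $\phi$ in $e$: at each ``time'' $t$ every one-dimensional section of $\phi$ parallel to $e$ is driven continuously towards its symmetric decreasing rearrangement. I would take as the backbone the two standard properties of this flow: (i) \emph{equimeasurability}, so that $\int_{B(0,R)} \Phi(\phi^t) = \int_{B(0,R)} \Phi(\phi)$ for every Borel $\Phi$, and in particular $\int_{B(0,R)} F(\phi^t)$ is independent of $t$, where $F$ is a primitive of $f$; and (ii) the Dirichlet integral $t \mapsto \int_{B(0,R)} |\nabla \phi^t|^2$ is nonincreasing and admits a one-sided derivative at $t = 0^+$.

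The crux is Brock's first-variation formula for the Dirichlet integral along this flow. Writing the energy $E(v) = \tfrac12 \int_{B(0,R)} |\nabla v|^2 - \int_{B(0,R)} F(v)$, property (i) gives $E(\phi^t) - E(\phi) = \tfrac12\big(\int_{B(0,R)}|\nabla\phi^t|^2 - \int_{B(0,R)}|\nabla\phi|^2\big) \leq 0$. Because $\phi$ is a weak solution of $-\Delta\phi = f(\phi)$, it is a critical point of $E$; combining this with the fact that $\int F(\phi^t)$ is frozen along the flow, I would argue that the one-sided derivative $\frac{d}{dt}\big|_{0^+}\int_{B(0,R)}|\nabla\phi^t|^2$ cannot be strictly negative, hence must vanish. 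Now Brock's derivative formula expresses precisely this quantity, on the regular set $\{\nabla\phi \ne 0\}$ and via the coarea formula, as a nonpositive integral over the level sets whose integrand measures the failure of the two symmetric branches of each section (the parts of a section lying on either side of its barycenter) to carry the same gradient modulus. Its vanishing is exactly Brock's pointwise characterization of \emph{local symmetry in the direction $e$}: near every point of $\{\nabla\phi\ne0\}$, $\phi$ coincides with its reflection across some hyperplane orthogonal to $e$.

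With local symmetry available in every direction $e$ I would assemble local radial symmetry. On the open set $D \cap \{\nabla\phi\ne0\}$ the level sets are $C^1$ hypersurfaces by the implicit function theorem, and a $C^1$ hypersurface that is, near each of its points, reflection-symmetric across a hyperplane orthogonal to every direction must be a piece of a sphere; the centers depend continuously and hence locally constantly on the point, so a maximal region foliated by concentric level spheres is an open annulus or ball $A_k$ on which $\phi$ is radially symmetric and strictly decreasing in the radial variable. Only countably many such maximal regions can occur, since each contains a point with rational coordinates, giving $A = \bigcup_{k\in K} A_k$ with $K$ countable. Setting $S = D \setminus A$, which is contained in $\{\nabla\phi = 0\}$, yields the decomposition $D = A \cup S$ with $\nabla \phi = 0$ on $S$, as claimed.

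The step I expect to be the main obstacle is the rigorous derivation and use of the derivative formula under the sole assumption that $f$ is continuous. Since $f$ is not assumed Lipschitz, $\phi$ is only $C^1$ on $D$ and its superlevel sets need not be smooth, so the one-sided derivative of the Dirichlet integral along the CSS flow cannot be computed by naive differentiation under the integral sign; it must be justified by measure-theoretic tools (coarea formula, sets of finite perimeter, reduction to almost every regular level), and the interchange of the limit $t \to 0^+$ with these level-set integrals is delicate. Equally delicate is turning the critical-point property of the weak solution $\phi$ into the vanishing of this one-sided derivative, since the CSS path need not be differentiable in $H^1_0$; this compatibility between the symmetrization flow and the weak formulation of the equation is the genuine technical heart of the argument. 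A secondary, but still nontrivial, point is the matching of the local centers across directions into a single center on each component, together with the control of the interface between $A$ and the plateau set $S$.
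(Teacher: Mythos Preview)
The paper does not prove Theorem~\ref{brock}; it is quoted from Brock's work and the only justification offered is the sentence ``Theorem~\ref{brock} follows from \cite[Corollary~7.6 and Theorem~7.2]{brock} \ldots\ See also \cite[Theorem~6.1]{brock}.'' So there is no in-paper proof to compare against: the author treats this as a black box imported from \cite{brock0,brock}. Your sketch is, in essence, an outline of Brock's own argument via continuous Steiner symmetrization, and the broad architecture (equimeasurability preserves $\int F(\phi^t)$, the Dirichlet integral is nonincreasing along the flow, the vanishing of its one-sided derivative at $t=0^+$ forces local symmetry in the direction $e$, and local symmetry in every direction yields the annular decomposition) is the right one.

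That said, one step in your outline is phrased in a way that, taken literally, does not go through. You write that since $\phi$ is a critical point of $E$, the one-sided derivative of $\int |\nabla\phi^t|^2$ at $t=0^+$ ``cannot be strictly negative, hence must vanish.'' Criticality of $E$ only controls derivatives along $H^1_0$-differentiable variations, and as you yourself note, the CSS path is not known to be differentiable in $H^1_0$; a saddle-type critical point is perfectly compatible with a strictly negative one-sided derivative along a nonsmooth path. Brock's actual mechanism is different: one combines the weak equation $\int \nabla\phi\cdot\nabla\psi=\int f(\phi)\psi$ (tested against suitable approximations of $\phi^t-\phi$) with the Cavalieri/equimeasurability structure to show directly that the nonpositive level-set integral appearing in the first-variation formula must vanish. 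You correctly flag this as ``the genuine technical heart of the argument'' in your last paragraph, but the earlier heuristic sentence should be replaced by this sharper statement rather than left as a plausibility claim. With that correction, your plan matches Brock's proof; relative to the paper under review, you are supplying a sketch of a result the author simply cites.
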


Theorem \ref{brock} follows from \cite[Corollary 7.6 and Theorem 7.2]{brock}. Following the notation of \cite{brock}, in our case $G(x,z)= |z|^2$ and $f_2=f_3=0$. See also \cite[Theorem 6.1]{brock} for a characterization of local symmetry in any direction.

\medskip

The proof of Theorem \ref{teo} follows at once from Theorem \ref{brock}. Indeed, in our case $D= \Omega$. Recall moreover that $\nabla \phi \neq 0$ in $\Omega$, hence $S= \emptyset$. Since $\Omega$ is connected we conclude that the union has only one term, that is, $\Omega$ is an annulus, and $\phi$ is radially symmetric and decreasing.

\section{Some aspects of linear elliptic operators with singular coefficients}

Let us recall that in the setting of Theorem \ref{teo3}, $f(0)<0<f(1)$. Therefore, the argument of the previous section does not work anymore: if we extend the function $\phi$ constantly outside $\Omega$, we do not get a solution of \eqref{BR}. 

We are able to conclude in this case by using a moving plane technique. As commented in the introduction, the main difficulty is that $f$ need not be Lipschitz continuous at the boundary values. As a consequence, one needs to deal with elliptic operators with coefficients that are singular at $\partial \Omega$. The study of such operators is the scope of this section.

More precisely, we fix a bounded domain $\Omega \subset \R^N $ ($N\geq 2$) with $C^2$ boundary, and we consider operators of the form:
$$ L= - \Delta  + c(x).$$
Maximum principles are known to hold for the operator $L$ if $c \in L^p(\Omega)$ with $p >N/2$. However, we are interested in coefficients satysfying:
$$ c(x) d(x) \in L^\infty(\Omega).$$
We have not been able to find an explicit reference on maximum principles for such operators. In what follows 
we will be concerned with the operator $L$ acting on functions defined in subdomains $\Omega_0 \subset \Omega$ with Lipschitz boundary. 

\subsection{Principal eigenvalue and maximum principle}
Let us define the associated quadratic form:
$$ Q: H_0^1(\Omega) \to \R, \ \ Q(\psi)= \int_{\Omega} |\nabla \psi|^2 + c(x) \psi(x)^2.$$

The above expression is well defined thanks to the Hardy inequality, that we recall here:

\medskip {\bf Hardy inequality: }{\it For any bounded domain $\Omega \subset \R^N$ with $C^2$ boundary, there exists a constant $C>0$ such that: }
$$ \int_{\Omega} \frac{| \psi(x) |^2}{d(x)^2} \, dx \leq C \int_{\Omega} |\nabla \psi(x)|^2 \, dx  \quad \forall \ \psi \in H_0^{1}(\Omega).$$ 
{\it In particular, the linear map $T$: }
$$ T: H_0^{1}(\Omega) \to L^2(\Omega), \ \ T(\psi) = \frac{\psi}{d},$$ 
{\it is well defined and continuous.}

The Hardy inequality holds also for Lipschitz domains $\Omega$ (see \cite{Necas}), but this will not be used in this paper.

\medskip 
For a Lipschitz domain $\Omega_0$ we say that $\omega \in H^1(\Omega_0)$ solves $L(\omega) \geq 0$ in a weak sense if for any $\psi \in H_0^1(\Omega_0)$, $\psi \geq 0$,
$$ \int_{\Omega_0} \nabla \omega \cdot \nabla \psi + c(x) \omega \psi \geq 0.$$

The above expression is well defined thanks to the Hardy inequality. Let us point out moreover that we can define the trace of $\psi \in H^1(\Omega_0)$ as a function in $L^2(\partial \Omega_0)$, see for instance \cite[Theorem 3.37]{mclean}. 

\medskip The next proposition deals with the first eigenvalue and the weak maximum principle for this kind of operators, in the spirit of \cite{BNV}

\begin{proposition} \label{propmp} The following assertions hold true:

\begin{enumerate}
	\item For any Lipschitz domain $\Omega_0 \subset \Omega$, the eigenvalue $$ \lambda_1(\Omega_0)= \inf \{ Q(\psi): \ \psi \in H_0^1(\Omega_0), \ \int_{\Omega_0} |\psi|^2 =1\}$$ is achieved.
	\item The corresponding eigenfunction $\phi_1$ is strictly positive (or negative) and is $C^{1,\alpha}$ locally in $\Omega_0$, for any $\alpha <1$. In particular the first eigenvalue is simple.

	\item There exists $r>0$ such that for any $q \in \partial \Omega$ and any Lipschitz domain $\Omega_0 \subset B(q,r) \cap \Omega$ we have that $\lambda_1(\Omega_0)>0$.
	\item If $\lambda_1(\Omega_0) > 0$ and $\omega \in H^1(\Omega_0)$ satisfies that $\omega \geq 0$ in $\partial \Omega_0$ and  $L(\omega) \geq 0$ in a weak sense, then $\omega \geq 0$. If moreover $\omega$ is a $C^1$ function in $\Omega_0$, then either $\omega > 0$ in $\Omega_0$ or $\omega =0$.
\end{enumerate}
	
\end{proposition}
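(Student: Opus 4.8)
\emph{Overall strategy.} All four assertions hinge on dominating the singular term $\int_{\Omega_0} c\,\psi^2$ by the gradient, so the first thing I would record is the basic estimate coming from $M:=\|c\,d\|_{L^\infty(\Omega)}<\infty$, Cauchy--Schwarz, and the Hardy inequality. For $\psi\in H_0^1(\Omega_0)$, extended by zero to $\Omega$,
$$ \Big|\int_{\Omega_0} c\,\psi^2\Big|\le M\int_{\Omega_0}\frac{\psi^2}{d}\le M\Big(\int_{\Omega_0}\frac{\psi^2}{d^2}\Big)^{1/2}\|\psi\|_{2}\le M\,C^{1/2}\,\|\nabla\psi\|_{2}\,\|\psi\|_{2}. $$
By Young's inequality this gives $Q(\psi)\ge \tfrac12\|\nabla\psi\|_2^2-K\|\psi\|_2^2$ for some constant $K>0$, so $Q$ is bounded below on the constraint set $\{\|\psi\|_2=1\}$ and every minimizing sequence is bounded in $H_0^1(\Omega_0)$.

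\emph{Assertion (1).} From a minimizing sequence $\{\psi_n\}$ I would extract, using the above bound and Rellich--Kondrachov on the Lipschitz domain $\Omega_0$, a subsequence with $\psi_n\weakto\psi$ in $H_0^1(\Omega_0)$ and $\psi_n\to\psi$ in $L^2$, so the constraint $\|\psi\|_2=1$ survives. Weak lower semicontinuity of $\|\nabla\cdot\|_2^2$ is standard; \textbf{the hard part is the weak continuity of the singular term} $\int_{\Omega_0} c\,\psi_n^2$. I would prove this by splitting $\Omega_0$ into the strip $\{d<\delta\}$ and its complement. On the complement $c$ is bounded, so $L^2$-convergence passes to the limit. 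On the strip I would use the estimate above together with $\int_{\{d<\delta\}}\psi_n^2\le \int_{\{d<\delta\}}\psi^2+o(1)$ (from $\psi_n^2\to\psi^2$ in $L^1$) and $\int_{\{d<\delta\}}\psi^2\to0$ as $\delta\to0$, to make the strip contribution uniformly small. Hence $\int c\,\psi_n^2\to\int c\,\psi^2$, $Q$ is weakly lower semicontinuous, and the infimum $\lambda_1(\Omega_0)$ is attained.

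\emph{Assertion (2).} A minimizer $\phi_1$ solves $-\Delta\phi_1=(\lambda_1-c)\phi_1$ weakly, and since $Q(|\phi_1|)=Q(\phi_1)$ I may take $\phi_1\ge0$. Because $c\,d\in L^\infty$ forces $c\in L^\infty_{\mathrm{loc}}(\Omega)$, the coefficients are locally bounded on any $K\subset\subset\Omega_0$; Moser/De Giorgi iteration gives $\phi_1\in L^\infty_{\mathrm{loc}}$, whence the right-hand side lies in $L^p_{\mathrm{loc}}$ for all $p$, and $W^{2,p}$ estimates plus Sobolev embedding yield $\phi_1\in C^{1,\alpha}_{\mathrm{loc}}(\Omega_0)$ for every $\alpha<1$. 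Writing locally $-\Delta\phi_1+c^+\phi_1\ge c^-\phi_1\ge0$ with $c^+$ bounded, the weak Harnack inequality (strong maximum principle) shows $\phi_1>0$ in the connected set $\Omega_0$. Simplicity then follows by the usual argument: any eigenfunction is a minimizer, so its absolute value is strictly positive; if the eigenspace had dimension $\ge2$, a suitable combination would vanish at an interior point without being identically zero, contradicting this strict positivity.

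\emph{Assertions (3) and (4).} For (3), if $\Omega_0\subset B(q,r)\cap\Omega$ then $d(x)\le|x-q|<r$ on $\Omega_0$, so $\big|\int c\,\psi^2\big|\le M\int \psi^2/d\le Mr\int \psi^2/d^2\le MrC\int|\nabla\psi|^2$, giving $Q(\psi)\ge(1-MrC)\|\nabla\psi\|_2^2$; choosing $r<1/(MC)$ and invoking Poincaré on $\Omega_0$ yields $\lambda_1(\Omega_0)>0$, with $r$ independent of $q$. For the weak maximum principle in (4), since the trace of $\omega$ is nonnegative, $\omega^-:=\max\{-\omega,0\}\in H_0^1(\Omega_0)$, and testing $L(\omega)\ge0$ with $\psi=\omega^-$ gives $\int\nabla\omega\cdot\nabla\omega^-+c\,\omega\,\omega^-=-Q(\omega^-)\ge0$, i.e. $Q(\omega^-)\le0$; as $\lambda_1(\Omega_0)>0$ forces $Q(\omega^-)\ge\lambda_1\|\omega^-\|_2^2$, we conclude $\omega^-=0$, that is $\omega\ge0$. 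Finally, if $\omega\in C^1(\Omega_0)$ and $\omega(x_0)=0$ at an interior point, the locally bounded-coefficient inequality $-\Delta\omega+c^+\omega\ge0$ and the strong maximum principle show $\{\omega=0\}$ is open (and closed by continuity), so connectedness of $\Omega_0$ gives $\omega\equiv0$; otherwise $\omega>0$.
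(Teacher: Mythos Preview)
Your proof is correct and follows essentially the same route as the paper; only two minor points deserve comment. In (2) your displayed inequality $-\Delta\phi_1+c^+\phi_1\ge c^-\phi_1\ge0$ drops the term $\lambda_1\phi_1$ and is not quite right when $\lambda_1<0$; the fix is immediate---apply the strong maximum principle to $-\Delta\phi_1+(c-\lambda_1)\phi_1=0$ with the locally bounded coefficient $c-\lambda_1$, which is exactly what the paper does. In (1) the paper avoids your strip--splitting argument by the cleaner observation that $\psi\mapsto c\psi$ is a bounded linear map $H_0^1(\Omega)\to L^2(\Omega)$ (by Hardy), hence weakly continuous, so $\int_{\Omega_0}(c\psi_n)\psi_n\to\int_{\Omega_0}(c\psi)\psi$ follows directly from weak-$L^2$ times strong-$L^2$ convergence.
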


\begin{proof} Assertion (1) follows from easily from the fact that if $\psi_n \weakto \psi$, then $\psi_n \to \psi $ and $c(x) \psi_n \weakto c(x) \psi $ in $L^2(\Omega)$. Then,
$$ \liminf_{n \to +\infty } \int_{\Omega} |\nabla \psi_n|^2 \geq \int_{\Omega} |\nabla \psi|^2, $$
$$ \int_{\Omega} \psi_n^2 \to \int_{\Omega} \psi^2, \ \ \int_{\Omega} c(x) \psi_n^2 = \int_{\Omega} (c(x) \psi_n) \psi_n  \to \int_{\Omega} c(x) \psi^2.$$

In order to prove (2), observe that $\phi_1$ is a weak solution of the problem:

\begin{equation} \label{eigen} - \Delta \phi_1= -c(x) \phi_1(x) + \lambda_1 \phi_1(x).\end{equation}

The $C^{1,\alpha}_{loc}$ regularity of $\phi_1$ follows from local regularity estimates (take into account that $c(x) \in L^{\infty}_{loc}(\Omega)$).

We now prove that $\phi_1$ is positive. Observe that $|\phi_1|$ is also a minimizer for $Q$, and hence it is also a weak solution of the problem \eqref{eigen}. If $\phi_1$ changes sign, the function $(\phi_1)^+ = \frac{1}{2}(| \phi_1| + \phi_1) $ is a nontrivial solution.

Hence, it suffices to deal with the case of nonnegative solutions that are equal to $0$ at some point in $\Omega$. However this is impossible by the classical maximum principle since $c(x)$ belongs to $L^\infty$ in the interior of $\Omega$.

The positivity of the eigenfunction implies also that the eigenvalue is simple. Indeed, given $\phi_1$ and $\phi_2$ two different eigenfunctions, let us take a nontrivial function $\phi = \mu_1 \phi_1 + \mu_2 \phi_2$, where $\mu_i \in \R$ are such that $ \int_{\Omega} \phi=0$. But $\phi$ cannot change sign, which implies that $\phi=0$, that is, $\phi_1$ and $\phi_2$ are proportional.


\medskip We now prove (3). By making use of the Hardy inequality, if $\psi \in H_0^1(\Omega_0)$,

$$ \int_{\Omega_0} |c(x) \psi(x) |^2  \, dx \leq \| c(\cdot) d(\cdot)\|_{L^{\infty}} \int_{\Omega_0} \frac{| \psi(x) |^2}{d(x)} \, dx \leq \| c(\cdot) d(\cdot)\|_{L^{\infty}} r \int_{\Omega} \frac{| \psi(x) |^2}{d(x)^2} \, dx$$$$ \leq \| c(\cdot) d(\cdot)\|_{L^{\infty}} C \, r \int_{\Omega} |\nabla \psi(x)|^2 \, dx.$$

It suffices to take $r$ such that $\| c(\cdot) d(\cdot)\|_{L^{\infty}} C \,  r<1$ to conclude.

\medskip We now prove (4). Take $\omega^- = \max \{- \omega, 0 \}$ which belongs to $H_0^1(\Omega)$ by the boundary assumption on $\omega$. We test the weak inequality $L(\omega) \geq 0$ against $\omega^-$, and obtain that:
$$ Q(\omega^-) \leq 0.$$

Since $\lambda_1(\Omega_0) >0$, we conclude that $\omega^- =0$. If now $\omega$ is of class $C^1$ and is not $0$, then $\omega>0$ by the usual maximum principle (recall, again, that inside $\Omega$ the coefficient $c(x)$ is in $L^{\infty}$). This concludes the proof.

\end{proof}

\subsection{Hopf lemma and Serrin corner lemma}

In order to prove the Hopf lemma and the Serrin corner lemma for singular elliptic operators, we will use the following comparison functions.

\begin{lemma} \label{comparison functions}
	Given $N \in \N$, $C>0$ and $r>0$, there exists $r_0>0$ and two nonnegative $C^\infty$ solutions $\psi_1$ and $\psi_2$ of the problems:
	
	\begin{equation} \label{hopf} \left \{ \begin{array}{rll}  - \Delta \psi_1(x)  + \displaystyle \frac{C}{r-|x|} \psi_1(x) & \leq 0 \   & \mbox{ if } x \in A(0; r_0, r), \\ \\ \psi_1(x)  =0, \ \ \partial_\nu \psi_1(x) & =-1.  &  |x| = r.  \end{array} \right. \end{equation}
	
\begin{equation} \label{corner} \left \{ \begin{array}{rll}  - \Delta \psi_2(x)  + \displaystyle \frac{C}{r-|x|} \psi_2(x) & \leq 0 \   & \mbox{ if } x \in A^+(0; r_0, r), \\ \\ \psi(x)  &=0  & \mbox{ if } |x| = r \mbox{ or } x_1=0  \end{array} \right. \end{equation}
Here $A(0; r_0, r)= \{ x \in \R^N, |x| \in (r_0,r)\}$ and $A^+(0; r_0, r)= A(0; r_0, r)\cap \{x_1>0\}$. Moreover, $\nabla \psi_2(p)=0$ but $$\frac{\partial^2 \psi_2}{\partial \eta^2} (p) > 0 \ \mbox{ for all } p \mbox{ with }\ p_1=0 \mbox{ and }|p|=r.$$ Here $\eta$ is any vector entering the domain non-tangentially, i.e., $\eta \cdot p<0$ and $\eta_1>0$. 
	
\end{lemma}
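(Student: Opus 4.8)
The plan is to construct both functions explicitly from a single radial profile and to verify every requirement by direct computation. Fix any $r_0 \in (0,r)$, write $\rho=|x|$ and $t=r-\rho \in (0,r-r_0)$ on the annulus, and look for $\psi_1$ of the form $\psi_1(x)=g(t)=g(r-|x|)$ with $g(0)=0$ and $g'(0)=1$; these two normalizations give exactly $\psi_1=0$ and $\partial_\nu\psi_1=-1$ on $\{|x|=r\}$, since the outer normal there is radial. For a radial function $\Delta\psi_1 = g''(t)-\frac{N-1}{\rho}g'(t)$, so the required inequality reduces to the one-dimensional estimate
\[ -g''(t)+\frac{N-1}{\rho}\,g'(t)+\frac{C}{t}\,g(t)\leq 0,\qquad t\in(0,r-r_0). \]
The idea is to take $g$ strongly convex so that $-g''$ dominates: choosing $g(t)=\lambda^{-1}(e^{\lambda t}-1)$ gives $g'(t)=e^{\lambda t}$, $g''(t)=\lambda e^{\lambda t}$, and factoring out $e^{\lambda t}$ turns the left-hand side into $e^{\lambda t}\bigl(-\lambda+\frac{N-1}{\rho}+\frac{C}{\lambda t}(1-e^{-\lambda t})\bigr)$. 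Since $\rho\geq r_0$ and $\frac{1-e^{-s}}{s}\leq 1$ for $s>0$, the bracket is at most $-\lambda+\frac{N-1}{r_0}+C$, which is negative once $\lambda$ is large. This yields $\psi_1$; it is smooth on the annulus (which avoids the origin) and nonnegative because $g\geq 0$ on $[0,\infty)$.

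For $\psi_2$ the natural guess is $\psi_2(x)=x_1\,\psi_1(x)$, which vanishes on $\{|x|=r\}$ (where $\psi_1=0$) and on $\{x_1=0\}$, and is nonnegative on $A^+(0;r_0,r)$. Using $\Delta(x_1\psi_1)=x_1\Delta\psi_1+2\partial_{x_1}\psi_1$, the operator applied to $\psi_2$ equals $x_1\bigl(-\Delta\psi_1+\frac{C}{t}\psi_1\bigr)-2\partial_{x_1}\psi_1$. The first term is $\leq 0$ by the previous step, but the cross term $-2\partial_{x_1}\psi_1=2g'(t)\frac{x_1}{\rho}=2e^{\lambda t}\frac{x_1}{\rho}$ is \emph{positive}, which is the main obstacle. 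The resolution is to keep the slack from the first term: bounding the whole expression by $x_1e^{\lambda t}\bigl(-\lambda+\frac{N-1}{r_0}+C+\frac{2}{r_0}\bigr)$ shows that enlarging $\lambda$ (so that $\lambda>\frac{N+1}{r_0}+C$) absorbs the cross term and gives the desired inequality. Thus the same exponential profile, with a larger $\lambda$, serves for both $\psi_1$ and $\psi_2$.

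It remains to verify the corner conditions at a point $p$ with $p_1=0$ and $|p|=r$. Since $\psi_1(p)=g(0)=0$ and $p_1=0$, the product rule gives $\nabla\psi_2(p)=\psi_1(p)\,e_1+p_1\nabla\psi_1(p)=0$. Because the gradient vanishes, $\partial^2_\eta\psi_2(p)=\eta^{T}H\eta$ with $H$ the Hessian of $\psi_2$ at $p$; differentiating $\psi_2=x_1\psi_1$ twice and setting $x_1=0$ leaves only $H_{ij}(p)=\delta_{i1}\partial_j\psi_1(p)+\delta_{j1}\partial_i\psi_1(p)$. Using $\nabla\psi_1(p)=-g'(0)\frac{p}{r}=-\frac{p}{r}$, a short computation yields $\partial^2_\eta\psi_2(p)=-\frac{2\eta_1}{r}(\eta\cdot p)$, which is strictly positive precisely because $\eta_1>0$ and $\eta\cdot p<0$ for a non-tangential entering direction. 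This settles all the verifications. The only genuinely delicate point is the sign of the cross term for $\psi_2$; once the exponential profile and the product ansatz are in place, everything else is a direct computation.
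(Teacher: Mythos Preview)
Your proof is correct. The overall architecture matches the paper's---a radial profile for $\psi_1$ and the product ansatz $\psi_2(x)=x_1\,\psi_1(x)$ for the corner barrier, followed by a direct Hessian computation at $p$---but the execution differs in two respects worth noting. First, the paper takes the polynomial profile $\rho(t)=(r-t)+\bigl(\tfrac{m-1}{t}+2C\bigr)\tfrac{(r-t)^2}{2}$ (here $t=|x|$) and \emph{then} chooses $r_0$ close to $r$ so that the inequality holds; you instead fix an arbitrary $r_0\in(0,r)$ and pick the exponential profile $g(s)=\lambda^{-1}(e^{\lambda s}-1)$ with $\lambda$ large, which actually gives a bit more (the barrier works on any prescribed annulus). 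Second, for the cross term $2\partial_{x_1}\psi_1$ in $\Delta(x_1\psi_1)$, the paper observes that $\Delta\bigl(x_1\tilde\rho(|x|)\bigr)=x_1\bigl(\tilde\rho''+\tfrac{N+1}{|x|}\tilde\rho'\bigr)$, i.e.\ the cross term is absorbed by passing to the radial Laplacian in dimension $m=N+2$, and then reruns the one-dimensional claim with that $m$; you instead keep $m=N$ and absorb the extra $2/\rho$ by enlarging $\lambda$. Both devices are clean; the paper's is a neat structural observation, while yours is the more classical ``make the exponent large enough'' barrier argument. The Hessian computation and the conclusion $\partial^2_\eta\psi_2(p)=-\tfrac{2\eta_1}{r}(\eta\cdot p)>0$ are identical in spirit to the paper's.
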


\begin{figure}[h]
	\centering 
	\begin{minipage}[c]{100mm}
		\centering
		\resizebox{80mm}{80mm}{\includegraphics{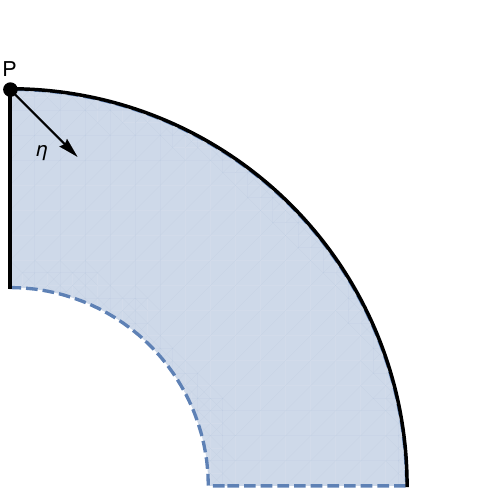}}
	\end{minipage}
	\caption{The function $\psi_2$ vanishes in the bold lines and is positive in the rest of the domain. At $p$ its gradient vanishes but its second derivative along $\eta$ is positive.}
\end{figure}

\begin{proof}
	
We first prove the following:

\medskip 

{\bf Claim: } Given $m \in \N$, $C>0$, $r>0$, there exists $r_0 \in (0, r)$ and $\rho \in C^{\infty}([r_0, r])$ such that:

\begin{equation} \label{hopf1} \left \{ \begin{array}{rll}  - \rho''(t) - (m-1) \displaystyle \frac{\rho'(t)}{t} + \displaystyle \frac{C}{r-t} \rho(t)  & \leq 0 \   & \mbox{ if } t \in [r_0, r),  \\  \\ \rho(t) &> 0 & \mbox{ if } t \in [r_0,r), \\ \\ \rho(r)  =0, \ \ \rho'(r) & =-1.  &    \end{array} \right. \end{equation}

We just give a explicit definition of $\rho: \R^+ \to \R$:
$$ \rho(t)= (r-t) + \Big( \frac{m-1}{t} + 2 C \Big) \frac{(r-t)^2}{2}.$$

Observe that $\rho(t) >0$ if $t <r$, $\rho(r)=0$, $\rho'(r)=-1$ and $\rho''(r)= \frac{m-1}{r} +2C$. Define:

$$ c(t)= \Big(\rho''(t) + (m-1) \frac{\rho'(t)}{t}  \Big) \frac{(r-t)}{\rho(t)}.$$

By definition, we have that:

\begin{equation} \label{ec} -\rho''(t) - (m-1) \frac{\rho'(t)}{t} + c(t) \frac{\rho(t)}{r-t}=0. \end{equation}

Observe also that

$$ \lim_{t \to r} \frac{\rho(t)}{r-t} =  - \rho'(1)=1.$$

As a consequence, taking limits in \eqref{ec}, we conclude that

$$ \lim_{t \to r} c(t)= 2C.$$

It suffices to take $r_0 \in (0, r)$ such that $c(t) > C $ for all $t \in (r_0,r)$ to conclude the proof of the claim.

\bigskip

The function $\psi_1$ can be easily defined as $\psi_1(x)= \rho(|x|)$, where $\rho$ is the function given in the claim with $m=N$. Clearly, $\psi_1$ satisfies the thesis of the lemma.

We define the function $\psi_2$ as:

$$ \psi_2(x)= x_1 \tilde{\rho}(|x|),$$
where now $\tilde{\rho}$ is the function given by the claim with $m = N+2$. Observe that:

$$ \Delta \psi_2= x_1 \Big ( \tilde{\rho}''(|x|) + (N-1)\frac{\tilde{\rho}'(|x|)}{|x|} \Big ) + 2 \tilde{\rho}'(|x|)\frac{x_1}{|x|} $$$$= x_1 \Big ( \tilde{\rho}''(|x|) + (m-1)\frac{\tilde{\rho}'(|x|)}{|x|} \Big ) \geq \frac{C}{r-|x|} \psi_2(x).$$
Fix a point $p$ such that $p_1=0$, $|p|=r$. Since $\psi_2$ vanishes for $x_1=0$ or $|x|=r$, $\nabla \psi_2(p)=0$. We now compute the second derivatives of $\psi_2$ at $p$.

It is clear, from direct computation, that if $j \neq 1, \ k \neq 1$,

$$ \frac{\partial^2 \psi_2}{\partial x_j \partial x_k } = x_1 \frac{\partial^2 \big ( \tilde{\rho}(|x|) \big ) }{\partial x_j \partial x_k } \Rightarrow \  \frac{\partial^2 \psi_2}{\partial x_j \partial x_k }(p) =0.$$
Moreover,
$$ \partial_{x_1} \psi_2(x)= \tilde{\rho}(|x|) + x_1^2 \frac{\tilde{\rho}'(|x|)}{|x|}.$$
From this we have that 
$$ \frac{\partial^2 \psi_2}{\partial x_1^2 }=  \rho'(|x|) \frac{x_1}{|x|} +   \partial_{x_1} \Big ( x_1^2 \frac{\tilde{\rho}'(|x|)}{|x|}\Big ) \Rightarrow \frac{\partial^2 \psi_2}{\partial x_1^2 }(p) =0. $$

Moreover, if $j \neq 1$,

$$ \frac{\partial^2 \psi_2}{\partial x_j \partial x_1 } = \rho'(|x|) \frac{x_j}{|x|} +   x_1^2 \partial_{x_j} \Big ( \frac{\tilde{\rho}'(|x|)}{|x|}\Big ) \Rightarrow $$ 
$$ \frac{\partial^2 \psi_2}{\partial x_j \partial x_1 } (p)= - \frac{p_j}{r} \neq 0.$$

Then we conclude by computing:

$$ \frac{\partial \psi_2}{\partial \eta^2}(p) = \sum_{j=2}^N \frac{\partial \psi_2}{\partial x_1 \partial x_j}(p) \eta_1 \eta_j =  \sum_{j=2}^N - \frac{p_j}{r} \eta_1 \eta_j = -\frac{1}{r} (p \cdot \eta) \eta_1>0.$$

\end{proof}

We now state the version of the Hopf lemma that is suited to our purposes.

\begin{proposition}[Hopf lemma for singular operators] \label{phopf} Let $\Omega \subset \R^N$ be a $C^2$ domain, and $c(x)$ satisfying that $c(x) d(x) \in L^{\infty}(\Omega)$. Let $B_r \subset \Omega$ be a ball of radius $r>0$, and $\omega \in C^1(\overline{B_r})$ solving
$$ - \Delta \omega + c(x) \omega \geq 0$$
in a weak sense. Assume that:
\begin{enumerate}
	\item $\omega \geq 0$ in $B_r$;
	\item $\omega(p)=0$ for some $p \in \partial B_r$.
\end{enumerate}

Then:
$$ \mbox{either } \partial_{\nu} \omega(p) <0 \ \mbox{ or } \omega =0 \mbox{ in } B_r$$

\end{proposition}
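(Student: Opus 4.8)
The plan is to run a classical Hopf-lemma comparison argument, but with the barrier $\psi_1$ produced in Lemma \ref{comparison functions} (rather than the usual exponential barrier) in order to absorb the singularity of $c$ at the outer sphere, and to compare against the operator $-\Delta + c^+$ instead of $-\Delta+c$ so that no control on the principal eigenvalue of the comparison annulus is required. The hard part, around which everything is organized, is exactly the possible blow-up of $c$ as $x$ approaches the outer boundary $\{|x|=r\}$ of $B_r$, where the standard barrier is useless.

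First I would dispose of the trivial alternative and set up the geometry. If $\omega$ is not identically $0$, then since $c\in L^\infty_{loc}(B_r)$ (on compact subsets of the open ball $d$ is bounded below, hence so is $1/|c|$), interior elliptic regularity gives $\omega\in W^{2,p}_{loc}(B_r)$, and from $-\Delta\omega + c^+\omega \geq -\Delta\omega + c\omega \geq 0$ with $c^+\geq 0$ the classical interior strong maximum principle yields $\omega>0$ throughout $B_r$. Translating the centre of $B_r$ to the origin, the key geometric observation is that $B_r\subseteq\Omega$ forces every point of $\partial\Omega$ to lie outside the ball, so $d(x)\geq r-|x|$ for all $x\in B_r$; combined with $c(x)d(x)\in L^\infty(\Omega)$ this gives the pointwise bound $|c(x)|\leq \|c\,d\|_{L^\infty}/(r-|x|)$ on $B_r$, which is precisely the profile the barrier of Lemma \ref{comparison functions} is built to handle.

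Next comes the comparison on an annulus. I would apply Lemma \ref{comparison functions} with $C=\|c\,d\|_{L^\infty}$ and the given $N,r$ to get $r_0$ and a smooth $\psi_1>0$ on $A:=A(0;r_0,r)$ satisfying $-\Delta\psi_1+\tfrac{C}{r-|x|}\psi_1\leq 0$, together with $\psi_1=0$ and $\partial_\nu\psi_1=-1$ on $\{|x|=r\}$. On the inner sphere $\{|x|=r_0\}\subset B_r$ one has $\omega\geq\delta>0$ by the previous step, so a small $\e>0$ makes $\e\psi_1\leq\omega$ there, while on the outer sphere $\psi_1=0\leq\omega$; hence $v:=\omega-\e\psi_1\geq 0$ on $\partial A$. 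The point of working with $c^+$ is now this: using $\omega\geq0$ and $c^-\geq0$ one checks $-\Delta\omega+c^+\omega\geq 0$ weakly on $A$, while $c^+\leq\tfrac{C}{r-|x|}$ and $\psi_1\geq0$ give $-\Delta\psi_1+c^+\psi_1\leq 0$, so that $-\Delta v+c^+v\geq 0$ weakly on $A$ (all integrals being finite by the Hardy inequality, since $c^+\psi_1$ is bounded and $c^-\omega\,\psi\leq \|c\,d\|_{L^\infty}\,\omega\,\psi/d\in L^1$). Testing this inequality against $v^-\in H_0^1(A)$ produces $\int_A\bigl(|\nabla v^-|^2+c^+|v^-|^2\bigr)\leq 0$, and because $c^+\geq 0$ both terms are nonnegative, forcing $v^-\equiv 0$. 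Thus $\omega\geq\e\psi_1$ on $\overline A$, and since $\omega(p)=0=\e\psi_1(p)$ with $v\geq0$ near $p$, differentiating along the inward normal gives $\partial_\nu\omega(p)\leq\e\,\partial_\nu\psi_1(p)=-\e<0$, which is the asserted dichotomy.

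To summarize the difficulty: the essential obstruction is the singular coefficient at the outer sphere, and it is defeated by matching the linear vanishing of the tailored barrier $\psi_1$ against the $1/(r-|x|)$ growth of $c$, using the elementary distance bound $d(x)\geq r-|x|$. The secondary coercivity issue—needed to conclude $v\geq 0$—would ordinarily require a statement like $\lambda_1(A)>0$ in the spirit of Proposition \ref{propmp}(3)–(4), but here it is sidestepped at no cost by replacing $c$ with $c^+\geq0$, so that the quadratic form tested against $v^-$ is automatically nonnegative. The remaining steps (interior positivity, choice of $\e$, and the final one-sided derivative computation) are routine.
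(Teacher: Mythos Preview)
Your argument is correct and runs along the same barrier-comparison lines as the paper, but with one genuine and worthwhile simplification. The paper first distinguishes whether $p\in\partial\Omega$, and in the singular case shrinks $B_r$ so that $\lambda_1(B_r)>0$ via Proposition~\ref{propmp}(3); it then appeals to Proposition~\ref{propmp}(4) both to obtain $\omega>0$ in $B_r$ and to deduce $\omega\geq\e\psi_1$ on the annulus. You avoid the eigenvalue machinery entirely by passing from $c$ to $c^+$: since $\omega\geq 0$ and $\psi_1\geq 0$ on $A$, the inequalities $-\Delta\omega+c^+\omega\geq 0$ and $-\Delta\psi_1+c^+\psi_1\leq 0$ follow directly, and testing $-\Delta v+c^+v\geq 0$ against $v^-$ gives a manifestly nonnegative quadratic form, forcing $v^-\equiv 0$ with no smallness condition on $r$. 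This is cleaner and does not require shrinking the ball; the paper's route, by contrast, keeps the sign-changing coefficient $c$ throughout and makes explicit the role of the principal eigenvalue, which is also used elsewhere in Section~4. Both approaches hinge on the same two ingredients---the distance bound $d(x)\geq r-|x|$ on $B_r$ and the tailored barrier $\psi_1$ from Lemma~\ref{comparison functions}---so the difference is one of economy rather than strategy.
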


\begin{proof} If $p \notin \partial \Omega$, the conclusion follows from the usual Hopf lemma, since $c(x)$ belongs to $L^{\infty}$ in the interior of $\Omega$. We focus on the case $p \in \partial \Omega$. Observe that we can assume, by taking a smaller ball if necessary, that the radius $r$ is such that $\lambda_1(B_r)>0$ according to Proposition \ref{propmp}, (3). If $\omega$ is not identically equal to $0$, by Proposition \ref{propmp}, (4) we have that $\omega >0$ in $B_r$.  Let us assume for simplicity that the center of $B_r$ is the origin. Take now $\psi_1$ as in Lemma \ref{comparison functions} with $C= \| c(\cdot ) d(\cdot) \|_{L^{\infty}}$. Since $\omega >0$ in $B_r$, we can take $\e>0$ small enough such that $\e \psi_1 (x) < \omega(x)$ if $|x|=r_0$. 
	
Our intention is to compare $\e \psi_1$ with $\omega$. Let us point out that $d(x) \geq r-|x|$ in $B_r$. Moreover, by \eqref{hopf},
$$ L(\psi_1) \leq 0,$$
in $A(0; r_0, r)$. Then we conclude that that $\omega - \e \psi_1$ satisfies:

$$ L(\omega - \e \psi_1) \geq 0 \mbox{ in } A(0; r_0, r),$$

with $\omega - \e \psi_1 \geq 0$ in $\partial A(0;r_0,r)$. 

By Proposition \ref{propmp}, (4), we have that $\omega \geq \e \psi_1$. As a consequence, $ \partial_{\nu} \omega(p) \leq -\e.$

\end{proof}

Finally we state and prove a version of the Serrin corner lemma (\cite[Section 4, Lemma 2]{serrin}), adapted to our setting.

\begin{proposition}[Serrin corner lemma for singular operators] \label{pserrin} Let $\Omega \subset \R^N$ be a $C^2$ domain, and $c(x)$ satisfying that $c(x) d(x) \in L^{\infty}(\Omega)$. Let $B_r \subset \Omega$ be a ball of radius $r>0$, and $B_r^+$ a half ball. We can assume, without loss of generality, that 
$$B_r^+ = \{ x \in \R^N: \ |x|<r, \ x_1>0\}.$$
Let	$\omega \in C^2(\overline{B_r^+})$ be a weak solution of the inequality:
	$$ - \Delta \omega + c(x) \omega \geq 0$$
	 Assume that:
	\begin{enumerate}
		\item $\omega \geq 0$ in $B_r^+$;
		\item $\omega(p)=0$ for some $p \in  \partial B_r \cap \{x_1=0\}$;
		\item $\nabla \omega(p)=0$.
	\end{enumerate}
	
	Then:
	$$ \mbox{either } \frac{\partial^2 \omega}{\partial \eta^2} (p) > 0  \ \mbox{ or } \omega =0 \mbox{ in } B_r^+,$$
	
where $\eta \in \R^N$ is any unit vector with $\eta_1>0$, $p \cdot \eta <0$.
	
\end{proposition}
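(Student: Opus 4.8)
The plan is to mimic the proof of the Hopf lemma (Proposition \ref{phopf}) but using the comparison function $\psi_2$ constructed in Lemma \ref{comparison functions} instead of $\psi_1$, since $\psi_2$ is precisely engineered to vanish on the two faces $\{x_1=0\}$ and $\{|x|=r\}$ of the half-annulus and to have a positive second derivative along non-tangential directions at the corner point $p$. First I would dispose of the trivial alternative: if $\omega$ is not identically $0$, then by the strong maximum principle part of Proposition \ref{propmp}, (4) (shrinking $B_r^+$ if necessary so that $\lambda_1>0$ on the relevant subdomain) we have $\omega>0$ in the interior $B_r^+$.

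The main step is to set up the comparison on the half-annulus $A^+(0;r_0,r)$. Taking $C=\|c(\cdot)d(\cdot)\|_{L^\infty}$ and $\psi_2$ as in Lemma \ref{comparison functions}, I would use that $d(x)\geq r-|x|$ in $B_r^+$ so that $c(x)\leq \frac{C}{r-|x|}$ on the support of the argument, whence $L(\psi_2)\leq 0$ in $A^+(0;r_0,r)$, i.e. $\psi_2$ is a weak subsolution. Since $\omega>0$ on the compact inner boundary piece $\{|x|=r_0,\ x_1\geq 0\}$, I can choose $\e>0$ small enough that $\e\psi_2<\omega$ there. On the flat face $\{x_1=0\}$ and the outer sphere $\{|x|=r\}$ both functions vanish ($\psi_2$ by construction, $\omega$ because it is nonnegative and attains a boundary minimum along the relevant portion), so $\omega-\e\psi_2\geq 0$ on all of $\partial A^+(0;r_0,r)$. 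Applying the weak maximum principle Proposition \ref{propmp}, (4) to $\omega-\e\psi_2$, which satisfies $L(\omega-\e\psi_2)\geq 0$ in the weak sense, yields $\omega\geq \e\psi_2$ throughout $A^+(0;r_0,r)$.

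Finally I would extract the corner estimate. Since $\omega\geq \e\psi_2$ with equality (both zero) at $p$, and $\nabla\omega(p)=\nabla\psi_2(p)=0$, a second-order Taylor expansion along the non-tangential direction $\eta$ gives
$$ \frac{\partial^2 \omega}{\partial \eta^2}(p) \geq \e\,\frac{\partial^2 \psi_2}{\partial \eta^2}(p) > 0, $$
using the strict positivity of $\frac{\partial^2\psi_2}{\partial\eta^2}(p)$ guaranteed by Lemma \ref{comparison functions}. This is the desired conclusion.

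The subtle point I would be most careful about is that $\omega$ need only be assumed $\geq 0$ on $B_r^+$, not on the ambient $\Omega$, so I must verify that $\omega-\e\psi_2$ is a legitimate competitor, meaning its negative part lies in $H_0^1(A^+(0;r_0,r))$. This requires checking the sign of $\omega-\e\psi_2$ on each boundary component separately, and in particular justifying that $\omega$ vanishes on the whole face $\{x_1=0\}\cap\{|x|\leq r\}$ — which in the intended moving-plane application follows from $\omega$ being a reflection difference that is antisymmetric, vanishing on the reflection plane. The other delicate item is that $\lambda_1(A^+(0;r_0,r))>0$ so that Proposition \ref{propmp}, (4) applies; this is arranged by taking $r$ (hence the half-annulus) small enough via Proposition \ref{propmp}, (3).
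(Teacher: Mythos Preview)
Your approach matches the paper's: reduce to $\omega>0$ in $B_r^+$ via Proposition \ref{propmp}, compare with $\e\psi_2$ on the half-annulus $A^+(0;r_0,r)$, and read off the second-derivative inequality at $p$. Two points deserve correction.

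First, your worry in the final paragraph is misplaced: you do \emph{not} need $\omega$ to vanish on the flat face $\{x_1=0\}$. Since $\psi_2=0$ there and $\omega\geq 0$ by hypothesis, the boundary inequality $\omega-\e\psi_2\geq 0$ is automatic on that face, with no appeal to the moving-plane context.

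Second, there is a small but genuine gap in your choice of $\e$ on the inner arc. You assert $\omega>0$ on the compact set $\{|x|=r_0,\ x_1\geq 0\}$, but the corner points $\{|x|=r_0,\ x_1=0\}$ lie on $\partial B_r^+$, where $\omega$ may well vanish. At such a corner both $\omega$ and $\psi_2$ are zero, and to guarantee $\omega\geq \e\psi_2$ uniformly nearby you must control the ratio $\omega/\psi_2$ as $x_1\to 0^+$. The paper closes this by noting that these corner points lie at distance at least $r-r_0$ from $\partial\Omega$, so $c$ is bounded there and the \emph{classical} Hopf lemma applies: if $\omega(q)=0$ at such a $q$ then $\partial_{x_1}\omega(q)>0$. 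Since $\psi_2(x)=x_1\tilde\rho(r_0)$ also vanishes linearly in $x_1$ along the inner arc, the ratio $\omega/\psi_2$ stays bounded below by a positive constant and a uniform $\e>0$ exists. With this addition your argument is complete and coincides with the paper's.
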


\begin{proof} If $p \notin \partial \Omega$, the conclusion follows from the usual Serrin lemma (see \cite[Section 4, Lemma 2]{serrin}), since $c(x)$ belongs to $L^{\infty}$ in the interior of $\Omega$.  We focus on the case $p \in \partial \Omega$. Observe that we can assume, by taking a smaller ball if necessary, that the radius $r$ is such that $\lambda_1(B^+_r)>0$ according to Proposition \ref{propmp}, (3). If $\omega$ is not identically equal to $0$, by Proposition \ref{propmp}, (4) we have that $\omega >0$ in $B^+_r$. 	
	
Take now $\psi_2$ as in Lemma \ref{comparison functions} with $C= \| c(\cdot ) d(\cdot) \|_{L^{\infty}}$.

At this point, recall that we are assuming $\omega >0$ in $B_r^+$, and observe that if $\omega(q)=0$ for some $q \in B_r \cap \{ x_1=0\}$, then $\partial_{x_1} \omega (q) >0$ by the classical Hopf lemma. As a consequence, there exists $\e>0$ such that $\e \psi_2 \leq \omega $ in $\partial A^+(0; r_0, r)$. 

We now compare $\e \psi_2$ with $\omega$. Let us point out that $d(x) \geq r-|x|$ in $A^+(0; r_0,r)$. Moreover, by \eqref{corner},
$$ L(\psi_1) \leq 0,$$
in $A^+(0; r_0, r)$. Then we conclude that $\omega - \e \psi_1$ satisfies:

$$ L(\omega - \e \psi_2) \geq 0 \mbox{ in } A^+(0; r_0, r),$$
with $\omega - \e \psi_2 \geq 0$ in $\partial A^+(0;r_0,r)$. 

By Proposition \ref{propmp}, (4), we have that $\omega \geq \e \psi_2$. Recall now that $\nabla \omega(p) = \nabla \psi_2(p)=0$. Define the function:
$ \kappa(t) =\omega(p + t \eta) - \e \psi_2(p + t \eta)$. Clearly, $\kappa(t)\geq 0$ for $t>0$, $\kappa(0)=0$ and $\kappa'(0)=0$. Hence $\kappa''(0) \geq 0$, which implies that:

$$ \frac{\partial^2 \omega}{\partial \eta^2} (p) \geq \ \e \, \frac{\partial^2 \psi_2}{\partial \eta^2} (p)>0.$$

\end{proof}

\section{Radial symmetry of $C^3$ solutions to overdetermined elliptic equations with non-Lipschitz nonlinearity. Conclusion of the proof of Theorem \ref{teo3}} 
In this section we prove the following result:
\begin{theorem} \label{reichel} Let $N \in \N$, $\Omega \subset \R^N$ a $C^2$ domain, 
	\begin{equation} \label{annular2} \Omega = G_0 \setminus \overline{G_1}, \end{equation}
	where $G_i$ are bounded simply connected domains and $\overline{G_1} \subset G_0$. Let $f \in C([0,1]) \cap C^1(0,1)$ and $\phi \in C^3(\overline{\Omega})$ be a solution of the overdetermined problem:
	
	\begin{equation} \label{od-2} \left \{ \begin{array}{rll}  -\Delta \phi  &= f(\phi), \ \ \phi  \in (0,1)  &\mbox{ in } \Omega, \\ \phi  &=0,  \quad \ \ \ \ \partial_\nu \phi=c_0, & \mbox{ in } \partial G_0, \\ \phi &= 1, \quad \ \ \partial_\nu \phi=c_1,  &\mbox{ in }\partial G_1. \end{array} \right. \end{equation}
If $c_i =0 $ we also assume that $f(i) \neq 0$, $i=0$, $1$. Then $\phi$ is a radially symmetric function with respect to a point $p \in \R^2$ and $G_i =B(p, R_i)$, $R_1 >R_0 >0$. Moreover, $\phi$ is strictly radially decreasing.
	
\end{theorem}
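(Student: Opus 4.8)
The plan is to run the method of moving planes, using as black boxes the weak and strong maximum principles (Proposition \ref{propmp}), the Hopf lemma (Proposition \ref{phopf}) and the Serrin corner lemma (Proposition \ref{pserrin}) for the singular operator $L=-\Delta+c(x)$. The whole difficulty is concentrated in producing, at each step of the procedure, a coefficient $c$ whose singularity at $\partial\Omega$ is controlled by $c(x)\,d(x)\in L^\infty$, so that these results become applicable. First I would dispose of the regular case: if $c_0\neq 0$, then $\nabla\phi$ does not vanish on $\partial G_0$, the gradient flow trajectories reach the boundary transversally, and the computation in Step 3 of Proposition \ref{prop} shows that $f$ extends as a $C^1$ function up to that endpoint; likewise for $c_1\neq 0$. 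In that situation $c$ is bounded and the classical symmetry results for overdetermined problems apply directly \cite{reichel, sirakov, serrin}. Hence I may assume $c_0=c_1=0$, in which case $f(0)\neq 0\neq f(1)$ by hypothesis, and arguing as in Step 4 of Proposition \ref{prop} one checks $f(0)<0<f(1)$.

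The key preliminary is the boundary behaviour of $\phi$ and of $f'$. Since $\phi\in C^3(\overline{\Omega})$ with $\phi=0$, $\nabla\phi=0$ and $\partial^2_\nu\phi=-f(0)\neq 0$ on $\partial G_0$, a Taylor expansion gives $\phi(x)\sim \tfrac{|f(0)|}{2}\,d(x)^2$ and $|\nabla\phi(x)|\sim |f(0)|\,d(x)$ as $x\to\partial G_0$, and symmetrically near $\partial G_1$. Differentiating $\Delta\phi=-f(\phi)$ and pairing with $\nabla\phi$ yields $f'(\phi)\,|\nabla\phi|^2=-\nabla(\Delta\phi)\cdot\nabla\phi$; since $\Delta\phi\in C^1(\overline{\Omega})$ the right-hand side is $O(d)$ while $|\nabla\phi|^2\sim d^2$, so $|f'(s)|\leq C/\sqrt{s}$ near $s=0$ and $|f'(s)|\leq C/\sqrt{1-s}$ near $s=1$. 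Writing $c(x)=-f'(\xi(x))$ with $\xi(x)$ between $\phi(x)$ and $\psi(x)$ (where $\psi=\phi\circ\pi$ is the reflection across the current hyperplane), and using $d(x)\sim\sqrt{\phi(x)}$ together with a comparison of the distances of $x$ and of its reflected point to $\partial\Omega$, this gives the bound $c(x)\,d(x)\in L^\infty$ on each reflected cap.

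With this in hand I would carry out the moving plane scheme in an arbitrary direction, say $x_1$. Sliding the hyperplane $T_\lambda=\{x_1=\lambda\}$ inward, $w=\psi-\phi$ solves $-\Delta w+c(x)w=0$ on the cap $\Sigma_\lambda=\{x\in\Omega:\ x_1>\lambda\}$, with the correct sign on $\partial\Sigma_\lambda$ coming from $w=0$ on $T_\lambda$ and from the overdetermined data on $\partial\Omega$. For $\lambda$ near the extreme value the cap is thin and contained in a small ball centred at a boundary point, so $\lambda_1(\Sigma_\lambda)>0$ by Proposition \ref{propmp}(3) and the weak maximum principle of Proposition \ref{propmp}(4) starts the procedure. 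Letting $\lambda_0$ be the critical position, the strong maximum principle forces either symmetry or $w$ of one strict sign in the interior; I would rule out the latter by moving the plane one further step. This gain of one step is exactly where the singular Hopf lemma (at an interior point where $T_{\lambda_0}$ touches $\partial\Omega$ orthogonally) and the singular Serrin corner lemma (at the corner points of $T_{\lambda_0}\cap\partial\Omega$) enter, now admissible thanks to $c(x)\,d(x)\in L^\infty$. Symmetry in every direction yields a common centre $p$, radial symmetry of $\phi$ and $G_i=B(p,R_i)$, while $\nabla\phi\neq 0$ in $\Omega$ gives strict radial monotonicity.

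I expect the main obstacle to be the two intertwined points above: establishing $c(x)\,d(x)\in L^\infty$ uniformly along the procedure, which forces both the sharp blow-up rate $|f'(s)|\leq C s^{-1/2}$ and a careful comparison of $d(x)$ with the distance of the reflected point to $\partial\Omega$, and the endpoint corner analysis, where the classical Serrin argument must be replaced by its singular counterpart from Section 4. The annular topology adds the bookkeeping of handling both boundary components simultaneously, but it does not alter the structure of the argument.
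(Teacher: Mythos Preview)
Your overall strategy coincides with the paper's: derive the growth bound $|f'(\phi(x))|\le C/d(x)$ from the $C^3$ regularity of $\phi$ together with the boundary behaviour forced by $c_i=0$ and $f(i)\neq 0$, and then run the moving plane method using Propositions~\ref{propmp}, \ref{phopf}, \ref{pserrin} in place of the classical tools. The key estimate you obtain, $|f'(s)|\le C s^{-1/2}$ near $0$ (and the analogue near $1$), is exactly the content of Lemma~\ref{lemasingular}, just expressed in terms of $s$ rather than $x$.

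Two points, however, need to be corrected. First, your claim that ``$c(x)d(x)\in L^\infty$ on each reflected cap'' does not follow from what you have. From $|f'(s)|\le C s^{-1/2}$ and the mean value theorem you only get $|c(x)|\le C/\min\{d(x),d(\pi(x))\}$, and there is no global comparison $d(\pi(x))\ge c\,d(x)$ on the cap (the reflected image can graze $\partial\Omega$ while $x$ stays well inside). The paper therefore does \emph{not} claim a global cap bound: the continuation step (pushing $\lambda_*$ down to the geometric critical value $\mu_*$) is carried out with the \emph{classical} maximum principle and Hopf lemma on balls $B$ with $\overline{B}\subset\Omega$, where $f$ is Lipschitz; and only at the final position $\lambda_*=\mu_*$ does one localize to a small ball $B_r\subset\Omega\cap\pi(\Omega)$ tangent at the critical boundary point, on which \emph{both} $d(x)\ge r-|x|$ and $d(\pi(x))\ge r-|x|$, yielding $|c(x)|(r-|x|)\le C$ and making Propositions~\ref{phopf} and \ref{pserrin} applicable. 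Similarly, the paper handles the initial step by a direct Taylor argument (using $\partial_\nu^2\phi=-f(0)>0$) rather than via Proposition~\ref{propmp}(3).

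Second, you have inverted the roles of the two singular lemmas. The singular Hopf lemma (Proposition~\ref{phopf}) is used at a point of \emph{internal tangency}, i.e.\ a point $p\in\Gamma_i^+$ with $\pi(p)\in\Gamma_i$, which lies on $\partial\Omega$ but \emph{away} from $T_{\lambda_0}$. The singular Serrin corner lemma (Proposition~\ref{pserrin}) is used at a point $p\in\Gamma_i\cap T_{\lambda_0}$ where $\partial\Omega$ meets the hyperplane orthogonally. Your description ``Hopf at an interior point where $T_{\lambda_0}$ touches $\partial\Omega$ orthogonally'' conflates the two cases. With these two adjustments your outline becomes the paper's proof.
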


In view of Proposition \ref{prop}, Theorem \ref{teo3} follows inmediately from Theorem \ref{reichel}.

We point out that Theorem \ref{reichel} is a version of the result of \cite{reichel} (see also \cite{sirakov}) where $f$ is not necessary Lipschitz continuous around the values 0, 1. In exchange, we ask the solution to be $C^3$ up to the boundary, and also $f(i) \neq 0$ if $c_i=0$.

The next lemma is a crucial ingredient in the implementation of the moving plane method in this framework, together with the results of Section 4.

	\begin{lemma} \label{lemasingular} Under the assumptions of Theorem \ref{reichel}, there exists $C>0$ such that:
		
		$$ |f(\phi(x)) - f(\phi(y))| \leq \frac{C}{\min \{d(x), d(y)  \}} |\phi(x)- \phi(y)| \ \forall x, \ y \in \Omega.$$
	\end{lemma}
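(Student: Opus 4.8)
The plan is to reduce everything to a pointwise estimate on $f'$ together with a geometric fact about level sets. By the mean value theorem there is a value $\xi$ lying between $\phi(x)$ and $\phi(y)$ with
$$ |f(\phi(x))-f(\phi(y))| = |f'(\xi)|\,|\phi(x)-\phi(y)|, $$
so it suffices to bound $|f'(\xi)|$ by $C/\min\{d(x),d(y)\}$. To control $f'$ I would use that $-\Delta\phi = f(\phi)$ in $\Omega$; differentiating this identity (legitimate since $\phi(z)\in(0,1)$ and $f\in C^1(0,1)$) gives $\nabla\Delta\phi = -f'(\phi)\nabla\phi$, whence, since $\nabla\phi \neq 0$ in $\Omega$,
$$ |f'(\phi(z))| = \frac{|\nabla\Delta\phi(z)|}{|\nabla\phi(z)|} \qquad \text{for every } z \in \Omega. $$
Because $\phi \in C^3(\overline{\Omega})$, the numerator is continuous up to the boundary and hence bounded by some $M>0$ on $\overline{\Omega}$, so the whole problem is to bound $|\nabla\phi|$ from below.

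The heart of the argument is the global lower bound $|\nabla\phi(z)| \geq c\, d(z)$ for all $z \in \Omega$, for some $c>0$. On any compact subset of $\Omega$ this is immediate, since $\nabla\phi$ does not vanish and $d$ is bounded above; the content is near $\partial\Omega$. If $c_i \neq 0$ on the relevant component, $|\nabla\phi|$ is bounded below by a positive constant in a collar and the bound is trivial. If instead $c_i = 0$ (so $f(i) \neq 0$), I would expand $\phi$ near a boundary point $q$ using the overdetermined data: since $\phi$, $\partial_\nu\phi$ and the tangential second derivatives all vanish on the component (cf. \eqref{key3}), the only surviving second-order term is $\partial^2_\nu\phi(q) = \Delta\phi(q) = -f(i)$. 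Writing an interior point as $z = q - d(z)\,\nu(q)$, Taylor's formula then gives $|\partial_\nu\phi(z)| = |f(i)|\,d(z) + o(d(z))$, while the deviation of $\phi$ from its boundary value is $\tfrac12|f(i)|\,d(z)^2 + o(d(z)^2)$ (consistent with $f(0)<0<f(1)$ from Proposition \ref{prop}). Hence $|\nabla\phi(z)| \geq c\,d(z)$ in the collar, and combining the three regions yields the global bound. Substituting into the formula above gives the pointwise estimate $|f'(\phi(z))| \leq C_0/d(z)$ for all $z \in \Omega$.

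It remains to pass from this pointwise bound to the statement involving $\min\{d(x),d(y)\}$. Since $f'(\xi)$ may be evaluated at any point of the level set $\phi_\xi$, it is enough to exhibit one point $z$ with $\phi(z) = \xi$ and $d(z) \geq \min\{d(x),d(y)\}$. I would obtain it from the connectivity of the distance super-level set $\{z \in \Omega : d(z) \geq \min\{d(x),d(y)\}\}$: in an annular domain this set is again (topologically) an annulus, hence path-connected, and it contains both $x$ and $y$. Along a path joining $x$ to $y$ inside it, $\phi$ takes, by the intermediate value theorem, every value between $\phi(x)$ and $\phi(y)$, in particular $\xi$, at a point $z$ with $d(z) \geq \min\{d(x),d(y)\}$. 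Then $|f'(\xi)| \leq C_0/d(z) \leq C_0/\min\{d(x),d(y)\}$, which is the desired inequality with $C = C_0$.

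The main obstacle is the degenerate boundary estimate $|\nabla\phi(z)| \geq c\,d(z)$ when $c_i = 0$: one must check that the Taylor expansion is uniform along $\partial G_i$ and that no tangential first- or second-order contribution spoils it, which is precisely where the hypothesis $f(i)\neq 0$ (equivalently $\partial^2_\nu\phi \neq 0$) and the $C^3$ regularity up to the boundary enter. The geometric step is comparatively soft, but one should ensure that the super-level set of $d$ is genuinely connected for the possibly large value $\min\{d(x),d(y)\}$; for $x,y$ deep inside $\Omega$ this can alternatively be bypassed, since then $\xi$ stays in a compact subinterval of $(0,1)$ on which $f'$ is bounded, so that the estimate holds trivially after enlarging $C$.
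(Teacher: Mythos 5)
Your proposal is correct and follows essentially the same route as the paper: a lower bound $|\nabla\phi(z)|\geq c\,d(z)$ near the boundary (using $\partial_\nu^2\phi=-f(i)\neq 0$ when $c_i=0$), division of the bounded quantity $|\nabla\Delta\phi|$ by $|\nabla\phi|$ to get $|f'(\phi(z))|\leq C/d(z)$, and then the mean value theorem combined with a path joining $x$ to $y$ inside a distance super-level set on which the intermediate value $\xi$ is attained. The paper handles the connectivity caveat exactly as you suggest, by treating separately the case where both points lie at distance $\geq\e$ from $\partial\Omega$ (where $f'$ is bounded outright) and the case where at least one is close to the boundary, so that the relevant set $\Omega_r$ with $r=\tfrac12\min\{d(x),d(y)\}$ is path-connected for $\e$ small.
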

	
	\begin{proof} The proof is done in several steps.
		
		\medskip 
		
		{\bf Step 1: } There exists $c>0$ and a neighborhood of $\partial \Omega$ such that
		
		\begin{equation} \label{1bis} |\nabla \phi (x) | \geq c \, d(x). \end{equation}
	 
		\medskip If $c_i \neq 0$ for $i=0$ and/or $i=1$, the claim readily follows in a neighborhood of $\Gamma_i$. Instead, if $c_i=0$, we have that:
		
		$$c_i=0 \Rightarrow \nabla \phi(x) =0 \ \forall x \in \partial \Gamma_i   \Rightarrow \frac{\partial^2 \phi}{\partial \nu^2} \phi(x)= \Delta \phi(x)= -f(i) \neq 0 \ \forall x \in \Gamma_i.$$ With this we conclude the proof of step 1.
		
		\medskip
		
		{\bf Step 2: } There exists $C>$ such that
		\begin{equation} \label{estimate} |f'(\phi(x))| \leq \frac{C}{d(x)} \ \forall \, x \in \Omega. \end{equation}
		where $d(x)$ is defined in \eqref{d}.
		
		\medskip Let us fix $\e>0$ sufficiently small and denote $N_{\e}$ a tubular neighborhood of $\Omega$,
		
		$$ N_\e = \{p \in \Omega:\ d(p) \leq \e \}, \ \Omega_\e= \Omega \setminus N_\e.$$

		If $x \in \Omega_\e$ then $\phi(x) \geq \delta>0$ and hence \eqref{estimate} follows from the Lipschitz continuity of $f(t)$ for $t \geq \delta$. If instead $x$ belongs to $N_\e$, we can use estimate \eqref{1bis} and the $C^3$ regularity of $\phi$, to conclude:
		
		$$ |\nabla \Delta \phi(x)| = |f'(\phi(x))| |\nabla \phi(x)| \geq c \, d(x) |f'(\phi(x))|.$$
		
		From this \eqref{estimate} follows.
		
		\medskip 
		
		{\bf Step 3: } Conclusion. 
		
		\medskip 
		
		Take now $x$, $y$ two points in $\Omega$. If both points belong to $\Omega_\e$, then $\phi(x)$, $\phi(y)$ are both bigger than certain $\delta>0$. Since $f(t)$ is Lipschitz continuous for $t\geq \delta$, we are done.
		
		Let us assume now that at least one of the points belongs to $N_\e$, and take $r = \frac{1}{2} \min \{d(x), \ d(y) \} \leq \e/2$. Clearly, $ x, \ y \in \Omega_{r}$ and $\Omega_{r}$ is path connected if we have chosen $\e$ sufficiently small. Then, there exists a curve $$\gamma :[0,1] \to \Omega, \  \gamma(0)=x, \ \gamma(1)=y, \mbox{ and }  d(\gamma(t)) > r \ \forall \ t \in [0,1].$$
		
		We now use the mean value theorem:
		$$ f(\phi(x)) - f(\phi(y)) = f'(c)(\phi(x)- \phi(y)),$$
		for some $c$ between $\phi(x)$ and $\phi(y)$. By continuity, there exists $\xi \in [0,1]$ such that $\phi(\gamma(\xi))=c$. We now use \eqref{estimate} to conclude:
		
		$$ |f(\phi(x)) - f(\phi(y))| = |f'(\phi(\gamma(\xi))| \, |\phi(x)- \phi(y)| \leq \frac{C}{r} |\phi(x)- \phi(y)|. $$
		This finishes the proof of the lemma.
	\end{proof}

	%
	%
	%
	%
	
\begin{proof}[Proof of Theorem \ref{reichel}]
	
As commented previously, the proof follows from the moving plane argument.The argument can be summarized as follows: denoting by $\pi$ the reflection with respect to a hyperplane, Lemma \ref{lemasingular} implies that $w(x)= \phi(\pi(x)) - \phi(x)$ solves an equation $L(w) = 0$, where $L$ is an operator of the class studied in Section 4. In view of Propositions \ref{propmp}, \ref{phopf}, \ref{pserrin}, one can perform the moving plane argument in the spirit of  \cite{serrin}.

The proof follows the same argument as \cite[Theorem 3]{reichel}, so we will be sketchy. For the sake of clarity in the presentation, let us extend $\phi$ by $1$ in $G_1$. We fix one direction, say $x_1$, and define:

$$ \bar{\lambda} = \max \{x_1: \ x \in \overline{\Omega}\}.$$

For any $\lambda \in \R$, we define:
$$ H_\lambda = \{x_1= \lambda\}, \ H_\lambda^+ = \{x_1> \lambda\}, \ H_\lambda^- = \{x_1 < \lambda\}$$
$$ G_{i, \lambda}^\pm = G_i \cap H_{\lambda}^\pm, \ \Omega_{\lambda}^\pm = \Omega \cap H_{\lambda}^\pm,  $$$$ \Gamma_{i, \lambda}^\pm = \Gamma_i \cap  H_{\lambda}^\pm.$$
We also denote by $\pi_\lambda$ the reflection with respect to $H_\lambda$. Let us define:

$$I = \{\mu \in \R: \ \forall \lambda \geq \mu, \ \pi_{\lambda}(G_{i, \lambda}^+ \cup \Gamma_{i, \lambda}^+) \subset G_i,  \  (-1)^i \nu_1(p) > 0 \ \forall  p \in \Gamma_i \cap  H_{\lambda}, \ i =1,\ 2 \}.$$

Recall that $\nu_1(p)$ is the first component of the normal unitary vector exterior to $\Omega$. Observe that $I$ is bounded from below and that $\bar{\lambda} \in I$ trivially. Roughly speaking, the set $I$ represents the values of $\lambda$ for which the reflected caps of $G_i$ remain \emph{strictly insde} $G_i$. 

We denote by $ \mu_*$ the infimum of $I$, and clearly $\mu_* < \bar{\lambda}$. We also define the closed set:
$$J = \{\lambda \in I: \ \phi \circ \pi_{\lambda}(x) \geq \phi(x) \ \forall \ x \in G_{0,\lambda}^+  \}. $$

We claim that there exists $\e>0$ such that $(\bar{\lambda}- \e, \bar{\lambda}) \subset J$. This is the so-called \emph{initial step} in the proof of \cite[Theorem 3]{reichel} (see in particular Case 2). This is evident if $c_0 \neq 0$; if $c_0=0$, we have that:
$$\frac{\partial^2 \phi}{\partial \nu^2} \phi(x)= \Delta \phi(x)= -f(0) > 0.$$
This readily implies the claim.

\medskip 
Denote by $\lambda_*$ the minimum of $J$, that obviously satisfies $\lambda_* \geq \mu_*$. 

\medskip We now claim that $\lambda_*= \mu_*$. Indeed, the assumption $ \lambda_* > \mu_*$ gives a contradiction exactly as in \cite{reichel}. Observe that the contradiction is based on the maximum principle and the Hopf lemma on balls $B$ with $\overline{B} \subset \Omega$. In this case, $f(\phi)$ does not take the extremal values $0$ and $1$, and hence it is Lipschitz. Then one can use the standard procedure of the moving planes to obtain a contradiction. 

\medskip

Let us now focus on the extremal value $\lambda_*= \mu_*$. Since this value is fixed from now on, and for the sake of clarity, we drop the subscript $\lambda$ in the notation of what follows. Observe that $ w= \phi \circ \pi - \phi \geq 0$ in $G_0^+$, and moreover 
$$L (w) = 0 \ \mbox{ in } G_0^+ \setminus \pi(G_1^-),$$ where $ L= - \Delta + c(x)$, 
$$ c(x)= \left \{\begin{array}{ll} \displaystyle - \frac{ f(\phi(\pi(x))) - f(\phi(x))}{\phi(\pi(x)) - \phi(x)} & \mbox{ if } \phi(\pi(x)) \neq \phi(x), \\ 0 &  \mbox{ if } \phi(\pi(x)) = \phi(x). \end{array} \right. $$
By Lemma \ref{lemasingular}, 
\begin{equation} \label{casi} |c(x)| \leq \frac{C}{\min \{d(x), d(\pi(x))  \}}. \end{equation}

Since $\mu^*$ is the infimum of $I$ we have that $ \pi(G_{i}^+) \subset G_i$, $i = 1, \ 2$,  but at least one of the following alternatives is satisfied:
\begin{enumerate}
	\item \emph{Internal tangency.} There exists $p \in \Gamma_{i}^+$ such that $\pi(p)  \in \Gamma_i$, $i=1$ or $i=2$.
	\item \emph{Orthogonality of $\Gamma_i$ and $H$.} There exists $p \in \Gamma_{i}\cap H$ such that $\nu_1(p)=0$, $i=1$ or $i=2$.
\end{enumerate}

We treat each of these cases separately.

\medskip 
{\bf Case 1: Internal tangency.} Assume that we have internal tangency at a point $p \in \Gamma_{1}^+$, for instance. By the overdetermined boundary condition,
\begin{equation} \label{jotio} \partial_{\nu} w(p)=0. \end{equation} 
By $C^2$ regularity, we can take a ball $B_r$ of radius $r$ in $\Omega$ tangent to $\Omega$ at $p$. We can shrink that ball such that $B_r \in G_0^+ \setminus \pi(G_1^-)$. In other words, $B_r \subset \Omega \cap \pi(\Omega)$. From this, we have that:
\begin{equation} \label{casiultima} d(x) \geq r- |x|, \ d(\pi(x)) \geq r- |x| \ \mbox{ for any } x \in B_r. \end{equation}
From \eqref{casi}, we conclude that :
\begin{equation} \label{ultima}  |c(x)| \leq \frac{C}{r-|x|} \mbox{ in } B_r. \end{equation}
We now apply Proposition \ref{phopf} to the domain $B_r$ together with \eqref{jotio} to conclude that $w=0$ in $B_r$. The unique continuation principle implies that $w=0 $ in $G_0^+ \setminus \pi(G_1^-)$, which implies that $\Omega$ is symmetric with respect to $H$. Moreover, $\phi|_{H^+}$ is decreasing in $x_1$.

If the internal tangency occurs at a point $p \in \Gamma_{0}^+$, one can reason in an analogous manner.

\medskip

{\bf Case 2: Orthogonality of $\Gamma_i$ and $H$.} Let us now consider the case of a certain point $p \in \Gamma_{0} \cap H$ with $\nu_1(0)=0$. Reasoning as in \cite[pages 307-308]{serrin}, we conclude that the second order derivatives of $w$ at $p$ are zero:

\begin{equation} \label{jotio2} D^2w(p)=0. \end{equation} 

By $C^2$ regularity, we can take a ball $B_r$ of radius $r$ in $\Omega$ tangent to $\Omega$ at $p$. We can shrink that ball such that $B_r^+ \in G_0^+ \setminus \pi(G_1^-)$. In particular, $B_r \in \Omega \cap \pi(\Omega)$. As a consequence, also here \eqref{casiultima} is satisfied, and then the estimate \eqref{ultima} holds. We now apply Proposition \ref{pserrin} to the domain $B_r$ and $B_r^+$: taking into account \eqref{jotio2} we conclude that $w=0$ in $B_r^+$. As in the previous case, the unique continuation principle implies that $\Omega$ is symmetric with respect to $H$. Moreover, $\phi|_{H^+}$ is decreasing in $x_1$.

If the point $p$ belongs to $\Gamma_{1} \cap H$ one can reason in an analogous manner.

\bigskip

In both cases, we conclude that $\Omega$ is symmetric with respect to the $x_1$ direction. Since the direction $x_1$ is arbitrary and can be replaced by any other, we conclude that $\Omega$ is radially symmetric with respect to a point and $\phi$ is radially decreasing.
	
\end{proof}

\section{Further comments and remarks}

In this last section we just give some comments on the results presented in this paper. The first observation is that, under the assumptions of Theorem \ref{teo3}, the nonlinear term $f$ given by Proposition \ref{prop} need not be Lipschitz near the extremal values $0$, $1$. Let us give such an example. Take $\phi(x)= |x|^2 (2-|x|)^2$ defined in the annulus $A(0; 1,2) \subset \R^2$. It can be checked that $-\Delta \phi = f(\phi)$, with

$$ f:[0,1] \to \R, f(t)= 4  (4 \sqrt{s} + \sqrt{1 - \sqrt{s}}-3).$$

Observe that $f$ is not differentiable at $0$, $1$, but $u = (\nabla \phi)^{\perp}$ satisfies all conditions of Theorem \ref{teo3}.

\medskip In the setting of Theorems \ref{teo} or \ref{teo2} we can assure that $f$ is never Lipschitz continuous near $0$ or $1$. Recall that in such case we have $f(0)=0$, $f(1)=0$, and ${\phi}$ solves \eqref{BR}. But ${\phi}$ is constant in sets with non-empty interior, and this would be impossible if $f$ were Lipschitz continuous, by the maximum principle.

\medskip As commented in the introduction, Theorem \ref{teo} can be adapted to treat the case in which $\Omega$ is a punctured simply connected domain. Indeed, the following result follows. 

\begin{thmx}\label{teo4} Let $u: \R^2 \to \R^2$ be a compactly supported $C^1$ solution of \eqref{1}, and define $\Omega = \{x \in \R^2:\ u(x) \neq 0\}$. We assume that:	
	\begin{enumerate}
		
		\item[(D1)]  $\Omega= G_0 \setminus\{q\}$, where $G_0$ is a $C^2$ simply connected domain and $q  \in G_0$. 
		\item[(D2)] $u$ is of class $C^2$ in $\Omega$.
		
	\end{enumerate} 
	
	\medskip Then there exists $R>0$ such that $\Omega=B(q,R) \setminus \{p\}$ and $u(x)$ is a circular vector field. Being more specific, there exist a certain function $V$ such that
	$$ u(x)= V(|x-q|) (x-q)^{\perp}.$$
\end{thmx}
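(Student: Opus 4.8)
The plan is to treat Theorem \ref{teo4} as a degenerate case of Theorem \ref{teo} in which the inner domain $G_1$ collapses to the single point $q$, and to verify that the two pillars of the proof of Theorem \ref{teo}---namely Proposition \ref{prop} (reduction to a semilinear overdetermined problem with vanishing Neumann data) and the application of Brock's local symmetry result (Theorem \ref{brock})---survive this degeneration with essentially no change. First I would set up the stream function $\phi$, which exists because $\dv u = 0$ and $u\cdot\nu = 0$ on $\partial G_0$. As before I normalise so that $\phi = 0$ on $\Gamma_0 = \partial G_0$; the new feature is that $\{q\}$ now plays the role of $\Gamma_1$, so after adding a constant and rescaling I would arrange $\phi(q) = 1$ (the limiting value of $\phi$ at the puncture). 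Since $u$ is compactly supported and $C^1$, we again have $\phi = 0$ on $\R^2\setminus G_0$ and $\nabla\phi = 0$ on $\partial G_0$, yielding $f(0) = -\Delta\phi = 0$ on the outer boundary exactly as in Step 2 of Proposition \ref{prop}.

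Next I would redo the level-set analysis of Step 1 of Proposition \ref{prop}. Since $u$ does not vanish on $\Omega = G_0\setminus\{q\}$, the stream function $\phi$ has no critical points there, so for $c\in(0,1)$ each level set $\phi_c$ is a $C^2$ curve by the implicit function theorem, and the same argument by contradiction (ruling out nested components via an interior local extremum) shows $\phi_c$ is a single closed curve enclosing $q$. Consequently $\Delta\phi$ is constant along $\phi_c$ by \eqref{parallel}, and one obtains a continuous $f\in C([0,1])\cap C^1(0,1)$ with $-\Delta\phi = f(\phi)$ in $\Omega$ and $f(0) = 0$. The behaviour at the puncture---the value $c = 1$---requires a short additional remark: because $u\in C^2(\Omega)$ is compactly supported and bounded, $\phi$ extends continuously to $q$ with $\phi(q) = 1$, and $q$ is a removable-type isolated extremal point of $\phi$; so $\phi$ is a bounded $C^2$ solution of $-\Delta\phi = f(\phi)$ on the punctured domain with the isolated singularity at $q$.

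With the semilinear problem in hand I would conclude exactly as in Section 3. Let $B(0,R)\supset\overline{G_0}$ and extend $\phi$ by $0$ outside $G_0$; then $\phi$ is a nonnegative solution of \eqref{BR} with $f$ continuous and $f(0) = 0$. Apply Theorem \ref{brock} with $D = \{0<\phi<\sup\phi\} = G_0\setminus\{q\}$, on which $\nabla\phi\neq 0$, so the plateau set $S$ is empty. Since $D$ is connected, the decomposition $D = \bigcup_k A_k$ collapses to a single annulus or ball on which $\phi$ is radially symmetric and strictly decreasing. Radial symmetry about some centre $p$ forces the inner boundary of this annulus---which is the level set $\phi\equiv 1 = \sup\phi$---to be a single point, namely $p = q$, and the outer boundary $\Gamma_0$ to be a circle $\partial B(q,R)$. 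Writing $\phi(x) = \Phi(|x-q|)$ then gives $u = \nabla^\perp\phi = V(|x-q|)(x-q)^\perp$ with $V(s) = \Phi'(s)/s$, which is the desired conclusion.

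The main obstacle I expect is the treatment of the puncture $q$: one must confirm that the isolated singularity is genuinely removable for the purpose of applying Brock's theorem, i.e.\ that $\phi$ (extended by its $0$-value outside $G_0$) is a bona fide weak solution of \eqref{BR} across $q$ and that $q\in D$ does not create a spurious plateau or violate the hypothesis that $D$ be open with $\phi\in C^1(D)$. Since $q$ is a single point (capacity zero in $\R^2$) and $\phi$ is bounded with $\nabla\phi\neq 0$ on a punctured neighbourhood, I would argue that the point $q$ contributes nothing to the weak formulation and that the local symmetry conclusion---radial decrease toward the maximum---automatically identifies $q$ as the radial centre where $\phi$ attains its supremum. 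This is precisely the sense in which Theorem \ref{teo4} is, as the authors say, a degenerate case of Theorem \ref{teo} with $G_1$ collapsed to a point, and the argument goes through ``with almost no changes.''
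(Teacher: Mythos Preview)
Your proposal is correct and follows essentially the same route as the paper's own argument in Section~6: replace $\Gamma_1$ by $\{q\}$ in Proposition~\ref{prop}, obtain $-\Delta\phi = f(\phi)$ on $G_0$ with $f(0)=0$, extend $\phi$ by $0$ outside $G_0$, and apply Theorem~\ref{brock}. Your concern about the puncture is in fact easily dispatched, since $u\in C^1(\R^2)$ globally forces $\phi\in C^2(\R^2)$, so there is no singularity to remove and $\phi$ is already a classical (hence weak) solution of \eqref{BR} across $q$.
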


For the proof, one can just follow the arguments of Section 3, replacing $\Gamma_1$ with $\{q\}$ in the notation. In the Step 1 of Proposition \ref{prop}, $\phi_c$ are closed and connected $C^1$ curves only for $c\in [0,1)$, and $\phi_1 = \{q\}$. Then we conclude the existence of a function $f \in C^1(0,1) \cap C[0,1]$ with:
	
	\begin{equation} \left \{ \begin{array}{rll}  -\Delta \phi  &= f(\phi), \ \ \ \phi  \in (0,1]  &\mbox{ in } G_0, \\ \phi  &=0,  \qquad \ \nabla \phi=0, & \mbox{ in } \Gamma_0. \end{array} \right. \end{equation}	
	
Moreover $\phi$ is defined in $\R^2$, $\phi=0$ in $\R^2 \setminus G_0$ and $f(0)=0$. This allows us to apply Theorem \ref{brock} to conclude.
	
\bigskip

In Section 5 we have proved a symmetry result for overdetermined elliptic problems defined in a annular domain. The main novelty with respect to \cite{reichel,sirakov} is that $f$ is not assumed to be Lipschitz continous on the boundary values. Of course the same ideas apply also in the framework of \cite{serrin}, and we obtain the following theorem.

\begin{theorem} \label{serrin} Let $\Omega \subset \R^N$ be a $C^2$ domain, $f \in C([0,+\infty)) \cap C^1(0,+\infty)$ and $\phi \in C^3(\overline{\Omega})$ be a solution of the overdetermined problem:
	
	\begin{equation} \label{od-1} \left \{ \begin{array}{rll}  -\Delta \phi & = f(\phi), \ \ \ \phi  >0  &\mbox{ in } \Omega, \\ \phi  & = 0, \ \ \ \ \partial_\nu \phi=c_0, & \mbox{ in } \partial \Omega. \end{array} \right. \end{equation}
	If $c =0 $ we also assume that $f(0) \neq 0$. Then $\Omega =B(p, R)$ and $\phi$ is a radially symmetric function with respect to $p$. Moreover, $\phi$ is strictly radially decreasing.
	
\end{theorem}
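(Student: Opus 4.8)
The plan is to prove Theorem \ref{serrin} by the classical method of moving planes à la Serrin, adapted to the singular setting developed in Section 4. The crucial preliminary observation is that the analogue of Lemma \ref{lemasingular} holds verbatim in the simply connected case: since $\phi \in C^3(\overline{\Omega})$ with $\phi=0$ and $\partial_\nu \phi = c_0$ on $\partial\Omega$, the same three-step argument establishes $|\nabla \phi(x)| \geq c\, d(x)$ near $\partial\Omega$ (using $\partial^2_\nu \phi = -f(0) \neq 0$ when $c_0 = 0$), hence $|f'(\phi(x))| \leq C/d(x)$, hence
$$ |f(\phi(x)) - f(\phi(y))| \leq \frac{C}{\min\{d(x), d(y)\}} |\phi(x) - \phi(y)| \quad \forall x,y \in \Omega. $$
First I would set up the moving planes in a fixed direction, say $x_1$, writing $\bar\lambda = \max\{x_1 : x \in \overline\Omega\}$, the half-spaces $H_\lambda^\pm$, the reflection $\pi_\lambda$, and the reflected cap $\Omega_\lambda^+ = \Omega \cap H_\lambda^+$. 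Define $I$ as the set of $\lambda$ for which $\pi_\lambda(\overline{\Omega_\lambda^+}) \subset \overline\Omega$ with the appropriate non-tangency of $\partial\Omega$ at $H_\lambda$, and $J = \{\lambda \in I : \phi \circ \pi_\lambda \geq \phi \text{ on } \Omega_\lambda^+\}$.

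The execution then mirrors the proof of Theorem \ref{reichel}, but simpler because there is only one boundary component. The \emph{initial step} shows $(\bar\lambda - \e, \bar\lambda) \subset J$: this is immediate if $c_0 \neq 0$, and if $c_0 = 0$ it follows from $\partial^2_\nu \phi(x) = \Delta\phi(x) = -f(0) > 0$ on $\partial\Omega$, which forces $\phi$ to increase as one moves inward and so the reflected values dominate near the cap. Setting $\lambda_* = \min J$ and $\mu_* = \inf I$, one shows $\lambda_* = \mu_*$: if $\lambda_* > \mu_*$, then on the interior cap $f(\phi)$ avoids the extremal value $0$ and is therefore Lipschitz, so the standard maximum principle and interior Hopf lemma give the usual Serrin contradiction. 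At the critical value $\lambda_* = \mu_*$, the function $w = \phi \circ \pi - \phi \geq 0$ on $\Omega^+$ solves $Lw = 0$ with $L = -\Delta + c(x)$, and the Hardy-type bound above yields $|c(x)| \leq C/\min\{d(x), d(\pi(x))\}$. At $\mu_*$ either an internal tangency point $p \in \partial\Omega \cap H^+$ with $\pi(p) \in \partial\Omega$ occurs, or an orthogonality point $p \in \partial\Omega \cap H$ with $\nu_1(p) = 0$.

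In the tangency case the overdetermined condition gives $\partial_\nu w(p) = 0$; I would inscribe a ball $B_r \subset \Omega \cap \pi(\Omega)$ tangent at $p$, so that $d(x), d(\pi(x)) \geq r - |x|$ and hence $|c(x)| \leq C/(r-|x|)$, and apply the singular Hopf lemma, Proposition \ref{phopf}, to deduce $w \equiv 0$ on $B_r$. In the orthogonality case one obtains $D^2 w(p) = 0$ as in \cite[pp.~307--308]{serrin}, inscribes a half-ball $B_r^+$, and applies the singular Serrin corner lemma, Proposition \ref{pserrin}, to again force $w \equiv 0$. Either way the unique continuation principle propagates $w \equiv 0$ throughout $\Omega^+$, so $\Omega$ is symmetric about $H_{\lambda_*}$ and $\phi$ is monotone in $x_1$ on the cap. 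Since the direction is arbitrary, $\Omega$ is a ball $B(p,R)$ and $\phi$ is radially symmetric and strictly decreasing. The main obstacle, and the only place where the non-Lipschitz nature of $f$ intervenes, is precisely the final step at $\lambda_* = \mu_*$: the coefficient $c(x)$ blows up at the boundary point $p$, so the classical Hopf and corner lemmas do not apply and one genuinely needs the singular versions of Section 4 together with the sharp bound from the Hardy inequality to keep the argument alive.
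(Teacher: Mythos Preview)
Your proposal is correct and follows exactly the approach the paper intends: the paper's own proof is the single sentence ``The proof follows exactly the same guidelines as in Theorem~\ref{reichel}, with the obvious modifications,'' and you have accurately spelled out those modifications---reproving Lemma~\ref{lemasingular} with one boundary component, running the moving-plane scheme, and invoking Propositions~\ref{phopf} and~\ref{pserrin} at the critical position. One terminological quibble: the bound $|c(x)|\leq C/(r-|x|)$ comes from Lemma~\ref{lemasingular} (mean value theorem plus $|f'(\phi(x))|\leq C/d(x)$), not directly from the Hardy inequality; the Hardy inequality enters only inside the proofs of the singular maximum principles in Section~4.
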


The proof follows exactly the same guidelines as in Theorem \ref{reichel}, with the obvious modifications.


%
%
%
%

\bigskip

{\bf Data availability statement: } This manuscript has no associated data.


\begin{thebibliography}{99}

\bibitem{BNV}{H. Berestycki, L. Nirenberg and S. R. S. Varadhan, }{The principal eigenvalue and maximum principle for second-order elliptic operators in general domains, }{Comm. Pure Appl. Math. 47 (1994), no. 1, 47-92.}

\bibitem{brock0}{F. Brock, }{Continuous Steiner symmetrization, }{Math. Nachr. 172 (1995), 25-48.}

\bibitem{brock}{F. Brock, }{Continuous rearrangement and symmetry of solutions of elliptic problems, }{Proc. Indian Acad. Sci. Math. Sci. 110 (2000), no. 2, 157-204.}

\bibitem{chae}{ D. Chae, P. Constantin, }{Remarks on a Liouville-type theorem for Beltrami flows, }{Int. Math. Res. Not. 2015, 10012-10016.}	

\bibitem{constantin}{ P. Constantin, J. La, V. Vicol, }{Remarks on a paper by Gavrilov: Grad-Shafranov equations, steady solutions of the three dimensional incompressible Euler equations with compactly supported velocities, and applications, }{Geom. Funct. Anal. 29 (2019) 1773-1793.}

\bibitem{dolbeault}{J. Dolbeault, P. Felmer, }{Monotonicity up to radially symmetric cores of positive solutions to nonlinear elliptic equations:
local moving planes and unique continuation in a non-Lipschitz case, }{Nonlinear Anal. 58 (2004) 299-317.}

\bibitem{enciso}{M. Dominguez-V\'{a}zquez, A. Enciso and D. Peralta-Salas,}{ Piecewise Smooth Stationary Euler Flows with Compact Support Via Overdetermined Boundary Problems, }{Arch. Rational Mech. Anal. 239 (2021) 1327-1347.}
\bibitem{gavrilov}{A.V. Gavrilov, }{A steady Euler flow with compact support, }{ Geom. Funct. Anal. 29 (2019) 190-197.}

\bibitem{gnn}{B. Gidas, W.-M. Ni, L. Nirenberg, }{Symmetry and related properties via the maximum principle, }{Comm.
Math. Phys. 68 (1979), 209-243.}

\bibitem{gs1}{J. G\'{o}mez-Serrano, J. Park, J. Shi, Y. Yao, }{Symmetry in stationary and uniformly-rotating solutions
	of active scalar equations, }{to appear in Duke Math. J, preprint arXiv:1908.01722.}

\bibitem{gs2}{J. G\'{o}mez-Serrano, J. Park, J. Shi, }{Existence of non-trivial non-concentrated compactly supported
	stationary solutions of the 2D Euler equation with finite energy, }{ preprint arXiv:2112.03821.}

\bibitem{hamel1}{F. Hamel and N. Nadirashvili, }{Shear 
flows of an ideal fluid and elliptic equations in unbounded domains, }{
Comm. Pure Appl. Math. 70 (2017), 590-608.}

\bibitem{hamel2}{F. Hamel and N. Nadirashvili, }{ Parallel and circular 
flows for the two-dimensional Euler equations, Semin.
Laurent Schwartz EDP Appl. 2017-2018, exp. V, 1-13.}

\bibitem{hamel3} {F. Hamel and N. Nadirashvili, }{ A Liouville theorem for the Euler equations in the plane, }{ Arch. Ration.
Mech. Anal. 233 (2019), 599-642.}

\bibitem{hamel4}{F. Hamel and N. Nadirashvili, }{Circular 
flows for the Euler equations in two-dimensional annular domains, and related
free boundary problems, }{to appear in JEMS, preprint arXiv:1909.01666v2.}

\bibitem{jiu}{Q. Jiu, Z. Xin, }{Smooth approximations and exact solutions of the 3D steady axisymmetric Euler equations, }{Comm. Math. Phys. 287 (2009) 323-350.}

\bibitem{kesavan-pacella}{S. Kesavan, F. Pacella, }{Symmetry of positive solutions of a quasilinear elliptic equation via isoperimetric inequalities, }{Appl.
Anal. 54 (1994) 27-35.}

\bibitem{lions}{P.L. Lions, }{Two geometrical properties of solutions of semilinear problems, }{Appl. Anal. 12 (1981) 267-272.}

\bibitem{mclean}{W. McLean, }{Strongly Elliptic Systems and Boundary Integral Equations, }{Cambridge University Press, Cambridge, UK (2000).}

\bibitem{nad}{N. Nadirashvili, }{Liouville theorem for Beltrami flow, }{Geom. Funct. Anal. 24 (2014) 916-921.}

\bibitem{Necas}{J. Ne\v{c}as, }{Sur une m\'{e}thode pour r\'{e}soudre les \'{e}quations aux d\'{e}riv\'{e}es partielles du
	type elliptique, voisine de la variationnelle’, Ann. Scuola Norm. Sup. Pisa (3) 16
	(1962), 305-326.}


\bibitem{reichel}{W. Reichel, }{Radial symmetry by moving planes for semilinear elliptic BVPs on annuli and other non-convex domains, }{Elliptic and parabolic problems (Pont-a-Mousson, 1994), 164-182, Pitman Res. Notes Math. Ser., 325, Longman Sci. Tech., Harlow, 1995.}

\bibitem{noi}{D. Ruiz and P. Sicbaldi, }{}{in preparation.}

\bibitem{serra}{J. Serra, }{Radial symmetry of solutions to diffusion equations with discontinuous nonlinearities, }{J. Differential Equations 254 (2013), 1893-1902.}

\bibitem{serrin}{J. Serrin, }{A symmetry problem in potential theory, }{ Arch. Rational Mech. Anal. 43 (1971), 304-318.}

\bibitem{sirakov}{B. Sirakov, }{Symmetry for exterior elliptic problems and two conjectures in optential theory, }{ Ann. Inst.
H. Poincar\'{e}, Anal. Non Lin\'{e}aire 18 (2001), 135-156.}


\end{thebibliography}
\end{document}